\newtheorem{theorem}{Theorem}[section]
\newtheorem{proposition}[theorem]{Proposition}
\newtheorem{corollary}[theorem]{Corollary}
\newtheorem{remark}[theorem]{Remark}
\newtheorem{definition}[theorem]{Definition}
\newcommand{\N}{\mathbb{N}}
\newcommand{\eps}{\varepsilon}
\title{On quantitative metastability for accretive operators}
\author{Andrei Sipo\c s${}^{a,b,c}$\\[2mm]
\footnotesize ${}^a$Research Center for Logic, Optimization and Security (LOS), Department of Computer Science,\\
\footnotesize Faculty of Mathematics and Computer Science, University of Bucharest,\\
\footnotesize Academiei 14, 010014 Bucharest, Romania\\[1mm]
\footnotesize ${}^b$Simion Stoilow Institute of Mathematics of the Romanian Academy,\\
\footnotesize Calea Grivi\c tei 21, 010702 Bucharest, Romania\\[1mm]
\footnotesize ${}^c$Institute for Logic and Data Science,\\
\footnotesize Popa Tatu 18, 010805 Bucharest, Romania\\[2mm]
\footnotesize Email: andrei.sipos@fmi.unibuc.ro\\
}
\date{}
\begin{document}

\maketitle

\begin{abstract}
Kohlenbach and the author have extracted a rate of metastability for approximating curves associated to continuous pseudocontractive self-mappings in Banach spaces which are uniformly convex and uniformly smooth, whose convergence is due to Reich. In this note, we show that this result may be extended to Reich's original convergence statement involving resolvents of accretive operators.

\noindent {\em Mathematics Subject Classification 2020}: 47H06, 47H09, 47H10, 03F10.

\noindent {\em Keywords:} Proof mining, sunny nonexpansive retractions, metastability, resolvents, accretive operators.
\end{abstract}

\section{Introduction}

If $T$ is a nonexpansive -- or, more generally, continuously pseudocontractive -- self-mapping of a convex, closed, bounded, nonempty subset of a uniformly smooth Banach space, and one denotes, for any $x$ in that set and any $t \in (0,1)$ by $x_t$ the unique point such that
$$x_t=tTx_t+(1-t)x,$$
then one has that the `approximating curve' $(x_t)$ converges to a fixed point of $T$ as $t \to 1$. This was first shown by Reich in 1980 \cite{Rei80}, thirteen years after this result had been proven for Hilbert spaces independently by Browder \cite{Bro67A} and Halpern \cite{Hal67}, thirteen years in which no such result was known to be true in any $L^p$ space other than the $L^2$ ones.

In a recent paper \cite{KohSip21}, Kohlenbach and the author have extracted a `rate of metastability' for the above result, and we shall now detail what we mean by that. That paper falls into the research program of `proof mining', which aims to analyze proofs in mainstream mathematics using tools from mathematical logic (interpretative proof theory) in order to extract additional (usually computational) content which may not be immediately apparent (for more details, see the book \cite{Koh08} and the recent survey \cite{Koh18}). Such additional content for a convergence theorem like the one above would naturally be a rate of convergence, but, in this case, counterexamples (due to e.g. Neumann \cite{Neu15}) show that a computable rate cannot exist even in Euclidean spaces. The next best thing, that in most cases theoretical results -- `metatheorems' -- of proof mining guarantee to be extractable is the above-mentioned rate of metastability  -- in the sense of Terence Tao 
\cite{Tao08,Tao08A}, the name having been suggested to him by Jennifer Chayes -- which is an upper bound on the $N$ in terms of the $\eps$ and of the $g$ (and possibily of more parameters specific to the problem at hand) in the following non-constructively equivalent formulation of the Cauchy property of a sequence:
$$\forall \eps > 0 \,\forall g\in\N^{\N} \,\exists N\in \N \,
\forall m,\ n\in [N,N+g(N)] \ \left(\| x_n-x_m\| \leq\eps\right).$$
The work of Kohlenbach and the author has shown that, by adding the additional hypothesis that the space is {\bf uniformly convex} (and thus still covering the case of $L^p$ spaces), the proof may be simplified to a sufficiently tame form that it falls under the guarantees of the metatheorems (see \cite{KohSip21} for more details), and thus that a rate of metastability may be obtained from the modified proof, which the paper's authors proceed to do (the question of finding such a rate had stood as an open problem in proof mining for ten years after similar rates had been found at the level of Hilbert spaces in \cite{Koh11, KohLeu12A}). The extracted bound is at once complicated (reflecting the manifold ways the various facets of the proof are put together) and complex (featuring the use of functional recursion going beyond primitive recursion), representing one of the most intricate rates of metastability ever produced by the proof mining program (and also a highly intricate formula by the standards of a general mathematical paper). As the authors of that paper point out in its introduction, `the enormous complexity of the final bound reflects the profound  combinatorial and computational content of Reich's deep theorem' \cite[p. 5]{KohSip21} (also, see that paper's introduction for more information on the historical significance of Reich's result and of the characterization of the limit point as the sunny nonexpansive retraction onto the fixed point set of $T$).

However, even disregarding the restriction to uniformly convex spaces, Reich's original result in \cite{Rei80} is more general than the one which has been analyzed above. Specifically, he states that, in the same background, if $A$ is an accretive operator satisfying some range condition (recovering the particular case above by taking $A:=Id-T$), then for any appropriate point $x$, the `approximating curve' $(J_{\lambda A}x)$ converges to a fixed point of $T$ as $\lambda \to \infty$. Accretive operators were introduced independently in 1967 by Browder \cite{Bro67} and Kato \cite{Kato}, as one of the natural generalizations of Hilbert spaces' monotone operators, and are motivated by the fact that many physical phenomena may be modelled by evolution equations of the form

$$\frac{\mathrm{d}x}{\mathrm{d}t} + Ax= 0.$$

{\it What we do in this paper is to show that the arguments in the paper \cite{KohSip21} may be generalized to the convergence of resolvents of accretive operators.}

The modifications that need to be made are non-trivial, but concentrated into some discrete regions of the proof. For that reason, and also due to the excessive length of the proof, we have chosen to focus our paper on the modifications themselves, referring frequently to the arguments and presentation of \cite{KohSip21} and only detailing the portions which are, relatively speaking, significantly changed.

Our result even improves slightly on the original one in the sense that a modulus of uniform continuity for $T$ is no longer needed. This is due to the fact that the resolvent of $A$ not only replaces the use of the operator $h_T$ (which was already, as it will be seen, a particular case of the resolvent), but also some (in hindsight extraneous) uses of $T$ itself. This is largely in line with the metatheorems for accretive operators recently obtained by Pischke \cite{Pis22}. Another small improvement is due to the fact that a careful examination of the proof shows that a bound on the point $x$ is no longer needed, and instead one just needs the bound on the diameter of the set $C$ (or on the distance from $x$ to some zero of $A$ in the `unbounded' versions of the theorem).

Section~\ref{prelim} reviews the necessary facts from the preliminaries section in \cite{KohSip21}, adding a new sub-section on accretive operators. Section~\ref{modifs} details the new versions of the qualitative convergence result, and how the increasingly quantitative proofs of it in \cite{KohSip21} need to be modified to accommodate the more general case of accretive operators. Section~\ref{final} presents and discusses our final, quantitative theorems.

We shall use the notation $\N^*:=\N \setminus \{0\}$.

\section{Preliminaries}\label{prelim}

\subsection{Banach spaces}

As in \cite{KohSip21}, we shall express and use uniform convexity for a Banach space $X$ in terms of the existence of a modulus $\eta : (0,2] \to (0,1]$, having the property that for all $\eps \in (0,2]$ and all $x$, $y \in X$ with $\|x\|\leq 1$, $\|y\| \leq 1$ and $\|x-y\| \geq \eps$ one has that
$$\left\|\frac{x+y}2\right\| \leq 1-\eta(\eps).$$

The following proposition has been obtained in \cite{KohSip21} as an application of \cite[Proposition~3.2]{BacKoh18}, which is a quantitative version of a theorem of Z\u alinescu \cite[Theorem~4.1]{Zal83}.

\begin{proposition}[{\cite[Proposition 2.4]{KohSip21}}]\label{l3-a}
Let $X$ be a uniformly convex Banach space having $\eta$ as a modulus and let $b \geq \frac12$. Put, for all $\eps \in (0,2]$,
$$\psi_{b,\eta}(\eps):= \min \left( \frac{\left(\min\left(\frac\eps2, \frac{\eps^2}{72b}\eta^2\left(\frac\eps{2b}\right)\right)\right)^2}4, \frac{\eps^2}{48}\eta^2\left(\frac{\eps}{2b}\right) \right).$$
Then, for all $\eps \in (0,2]$:
\begin{enumerate}[(a)]
\item $\psi_{b,\eta}(\eps) > 0$.
\item for all $x$, $y \in X$ with $\|x\|\leq b$, $\|y\| \leq b$, $\|x-y\| \geq \eps$, we have that
$$\left\| \frac{x+y}2 \right\|^2 + \psi_{b,\eta}(\eps) \leq \frac12\|x\|^2 + \frac12\|y\|^2.$$
\end{enumerate}
\end{proposition}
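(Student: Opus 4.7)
The plan is to invoke the quantitative version of Z\u alinescu's theorem given by \cite[Proposition~3.2]{BacKoh18}. Recall that Z\u alinescu's qualitative result \cite[Theorem~4.1]{Zal83} asserts that on a uniformly convex Banach space the squared norm is uniformly convex on every bounded set; the quantitative strengthening converts a modulus $\eta$ of uniform convexity of the space into an explicit modulus for the uniform convexity of $\|\cdot\|^2$ on a ball of radius $b$. The whole content of the proposition is a direct specialization of that result, so the proof reduces to matching the produced constants to the displayed formula.

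First, I would apply \cite[Proposition~3.2]{BacKoh18} with $f := \|\cdot\|^2$ on the ball of radius $b$ in $X$ to obtain an inequality of the required shape
$$\left\|\frac{x+y}{2}\right\|^2 + \psi(\|x-y\|) \leq \frac{1}{2}\|x\|^2 + \frac{1}{2}\|y\|^2,$$
valid whenever $\|x\|, \|y\| \leq b$, with $\psi$ built from $\eta$. The natural rescaling $u := x/(2b)$, $v := y/(2b)$ brings the pair into the closed ball of radius $1/2$, so that $\|u-v\| \geq \eps/(2b)$. The assumption $b \geq 1/2$ together with $\eps \leq 2$ guarantees $\eps/(2b) \leq 2$, placing this quantity safely in the domain of $\eta$; this is precisely why the argument $\eps/(2b)$ appears inside $\eta$ in the final formula. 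Applying uniform convexity in the form given in the preceding subsection then yields $\|(u+v)/2\| \leq 1 - \eta(\eps/(2b))$, i.e.\ $\|(x+y)/2\| \leq 2b(1 - \eta(\eps/(2b)))$, which one squares and compares against $\tfrac{1}{2}\|x\|^2 + \tfrac{1}{2}\|y\|^2$ via the general machinery of \cite{BacKoh18}.

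Second, I would carefully track the constants coming out of \cite[Proposition~3.2]{BacKoh18}. The outer minimum in the definition of $\psi_{b,\eta}$ reflects a two-branch analysis in that proof: one branch produces the term $\eps^2\eta^2(\eps/(2b))/48$, while the other produces the nested-minimum expression whose value is then squared and divided by $4$. Matching both branches to the explicit shape of the Ba\v{c}\'ak--Kohlenbach bound is essentially a bookkeeping exercise once the normalization $f = \|\cdot\|^2$ and the scaling factor $2b$ are fixed. Part~(a) then follows immediately: $\eta$ takes strictly positive values on $(0,2]$, $\eps > 0$, and $b \geq 1/2$ places $\eps/(2b)$ in the domain of $\eta$, so the outer minimum is a minimum of two strictly positive quantities.

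The main obstacle is not conceptual but computational: verifying that the constants $72$, $48$, and $4$ and the nested-minimum structure faithfully reproduce the output of \cite[Proposition~3.2]{BacKoh18} under the specific substitution $f = \|\cdot\|^2$, with no stray factor of $2$ mishandled during the rescaling. Since \cite{KohSip21} has already carried out this verification and obtained exactly the displayed formula, no genuinely new content is required here; the proof is in effect a one-line appeal to \cite[Proposition~3.2]{BacKoh18}.
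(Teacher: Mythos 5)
Your proposal takes the same route as the paper: Proposition~\ref{l3-a} is stated here purely as a citation of \cite[Proposition 2.4]{KohSip21}, and that paper in turn obtains it by specializing \cite[Proposition~3.2]{BacKoh18} (the quantitative Z\u alinescu theorem) to $f=\|\cdot\|^2$ on a ball of radius $b$, exactly as you outline. Since you correctly identify the source, the scaling by $2b$, the role of $b\geq\tfrac12$ in keeping $\eps/(2b)$ inside the domain of $\eta$, and defer the constant-tracking to the cited proof just as the paper does, there is nothing to fault beyond the (self-acknowledged) absence of the explicit bookkeeping.
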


We shall use the single-valued normalized duality mapping $j:X \to X^*$ which exists in all smooth spaces, with its usual properties, and we shall denote, for all spaces $X$, all $x^* \in X^*$ and $y \in X$, $x^*(y)$ by $\langle y,x^* \rangle$. More information about duality mappings may be found in the book \cite{Cio90}.

Analogously to uniform convexity, and again as in \cite{KohSip21}, we shall express and use uniform smoothness for a Banach space $X$ in terms of the existence of a modulus $\tau : (0, \infty) \to (0, \infty)$ such that for all $\eps > 0$ and all $x$, $y \in X$ with $\|x\|= 1$, $\|y\| \leq \tau(\eps)$ one has that
$$\|x+y\| + \|x-y\| \leq 2 + \eps\|y\|.$$

It is known that uniform smoothness implies the the norm-to-norm uniform continuity on bounded subsets of the normalized duality mapping (in addition, as pointed out in \cite{KohSip21}, it has been proven independently by B\'enilan \cite[p. 0.5, Proposition 0.3]{Ben72} and K\"ornlein \cite[Appendix A]{Kor15} that the norm-to-norm uniform continuity on bounded subsets of an arbitrary duality selection mapping implies back uniform smoothness; see also the related characterization in \cite[Lemma 2.2 and the Remark on page 116]{Rei81}); the following proposition, first obtained in \cite{KohLeu12}, expresses this fact quantitatively.

\begin{proposition}[{\cite[Proposition 2.12]{KohSip21}}]
Let $X$ be a uniformly smooth Banach space with modulus $\tau$. Put, for all $b > 0$ and $\eps > 0$,
$$r_1(\eps):=\min(\eps,2), \qquad r_2(b):= \max(b,1), \qquad \omega_\tau(b,\eps) := \frac{r_1(\eps)^2}{12r_2(b)} \cdot \tau\left(\frac{r_1(\eps)}{2r_2(b)}\right).$$
Then for all $b >0$, $\eps >0$ and all $x$, $y \in X$ with $\|x\| \leq b$ and $\|y \| \leq b$, if $\|x-y\| \leq \omega_\tau(b,\eps)$ then $\|j(x)-j(y)\| \leq \eps$.
\end{proposition}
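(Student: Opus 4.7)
The plan is to prove this via the quantitative form of \v{S}mulian's classical argument relating uniform smoothness to norm-to-norm uniform continuity of the duality map on bounded sets. I would first establish a key estimate on the unit sphere, then pass to the general bounded case by normalization, tracking constants carefully to land on the precise form of $\omega_\tau$.

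The unit-sphere estimate to prove is that for $u, v \in X$ with $\|u\|=\|v\|=1$ and any $\eps_0 > 0$,
$$\|j(u) - j(v)\| \leq \eps_0 + \frac{\|u - v\|}{\tau(\eps_0)}.$$
To see this, fix a unit $z$ and set $t = \tau(\eps_0)$. Since $j(u)$ is a supporting functional of $\|\cdot\|$ at $u$, the subgradient inequality yields $\langle z, j(u) \rangle \leq (\|u+tz\| - 1)/t$, and symmetrically $-\langle z, j(v)\rangle \leq (\|v-tz\|-1)/t$. Adding these, then bounding $\|u+tz\| \leq \|v+tz\| + \|u-v\|$ by the triangle inequality and invoking uniform smoothness at $v$ in the form $\|v+tz\|+\|v-tz\|-2 \leq \eps_0 t$, one obtains $\langle z, j(u) - j(v)\rangle \leq \eps_0 + \|u-v\|/\tau(\eps_0)$; replacing $z$ by $-z$ and taking the supremum over unit $z$ yields the claim.

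To pass to general $x, y$ with $\|x\|, \|y\| \leq b$, I would first invoke the definitions of $r_1$ and $r_2$ to reduce to the case $b \geq 1, \eps \leq 2$, and then split on whether $\min(\|x\|, \|y\|)$ lies below a threshold $c$ proportional to $\eps$. In the small-norm branch both $\|j(x)\|$ and $\|j(y)\|$ are small, so the triangle inequality $\|j(x) - j(y)\| \leq \|x\|+\|y\| \leq 2c+\|x-y\|$ already suffices (and this also covers the trivial cases $x=0$ or $y=0$). In the complementary branch one normalizes $\tilde x = x/\|x\|$, $\tilde y = y/\|y\|$, uses the elementary estimate $\|\tilde x - \tilde y\| \leq 2\|x-y\|/\min(\|x\|,\|y\|)$, applies the unit-sphere bound with a suitable $\eps_0$ of order $\eps/b$, and scales back via the decomposition
$$\|j(x) - j(y)\| \leq b\,\|j(\tilde x) - j(\tilde y)\| + \|x-y\|.$$

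The conceptual argument is entirely standard, and the real obstacle is arithmetic: to land on the precise expression $\frac{r_1(\eps)^2}{12\,r_2(b)}\tau\!\left(\frac{r_1(\eps)}{2\,r_2(b)}\right)$ one must calibrate the threshold $c$ and the parameter $\eps_0$ so that the three error contributions (the $2c$ from the small-norm branch, the $b\eps_0$ from the unit-sphere application, and the scaled remainder $b\|u-v\|/\tau(\eps_0)$) each occupy an appropriate fraction of $\eps$ and so that the combined normalizing constants reproduce the coefficients $12$ and $2$. Any balancing consistent with $b \geq 1$ and $\eps \leq 2$ delivers the conclusion in the stated form.
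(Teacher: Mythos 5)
The paper itself contains no proof of this proposition: it is quoted verbatim from \cite[Proposition 2.12]{KohSip21}, which in turn attributes it to \cite{KohLeu12}. So there is no in-paper argument to compare your proposal against, and I can only evaluate it on its own terms.

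Your unit-sphere lemma and its proof are correct: the subgradient inequalities at $u$ and $v$, the triangle inequality $\|u+tz\|\le\|v+tz\|+\|u-v\|$, and the smoothness estimate $\|v+tz\|+\|v-tz\|\le 2+\eps_0 t$ at $t=\tau(\eps_0)$ do yield $\|j(u)-j(v)\|\le\eps_0+\|u-v\|/\tau(\eps_0)$. The normalization strategy is also the standard one. The gap is in the final sentence: the claim that ``any balancing consistent with $b\ge 1$ and $\eps\le 2$ delivers the conclusion in the stated form'' is not verified, and in fact it is false for your decomposition. Writing $s=\eps/(2b)$ and using the forced choice $\eps_0=\eps/(2b)$ (which is the only way the argument of $\tau$ in $\omega_\tau$ appears), your large-norm branch gives
$$\|j(x)-j(y)\|\le \tfrac\eps2+\tfrac{\eps^2}{6c}+\omega_\tau(b,\eps),$$
and combined with the small-norm requirement $2c+\omega_\tau(b,\eps)\le\eps$ one needs $\eps^2-9\eps\omega_\tau+6\omega_\tau^2\ge 0$, i.e.\ $\omega_\tau/\eps\lesssim 0.12$. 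But every modulus satisfies $\tau(s)\le 2/(2-s)$ and nothing prevents a modulus from being close to this bound (it is attained in $\R$), so $\omega_\tau/\eps=\frac{\eps\,\tau(\eps/2b)}{12b}$ can be as large as $1/3$ when $s$ is near $1$. Even the sharper, case-split-free decomposition $\|j(x)-j(y)\|\le M\|j(\tilde x)-j(\tilde y)\|+\|x-y\|$ with $\|\tilde x-\tilde y\|\le 2\|x-y\|/M$, $M=\max(\|x\|,\|y\|)$, only gives $\frac\eps2+\frac{\eps^2}{6b}+\omega_\tau$, which for instance at $b=1$, $\eps=1.9$, $\tau(0.95)=1.9$ evaluates to about $2.13>\eps$, while the conclusion is nontrivial there. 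So the coefficient $1/12$ is not reached by this route for $s$ roughly in $(0.8,1)$; either the arithmetic must be exhibited in full with an additional idea for that range (you cannot wave it away), or the bound has to be obtained along a genuinely different path.
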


For the remainder of this section, we fix a smooth Banach space $X$ and $C \subseteq X$ a closed, convex, nonempty subset.

\subsection{Nonexpansive mappings and sunny nonexpansive retractions}

\begin{definition}
A map $Q:C \to X$ is called {\bf nonexpansive} if for all $x$, $y \in C$, $\|Qx-Qy\|\leq\|x-y\|$.
\end{definition}

Let $E \subseteq C$ be nonempty.

\begin{definition}
A map $Q:C\to E$ is called a {\bf retraction} if for all $x \in E$, $Qx=x$.
\end{definition}

\begin{definition}
A retraction $Q:C \to E$ is called {\bf sunny}\footnote{The term was first introduced in the paper \cite{Rei73}.} if for all $x \in C$ and $t \geq 0$ such that $Qx+t(x-Qx)\in C$,
$$Q(Qx+t(x-Qx))=Qx.$$
\end{definition}

\begin{proposition}[{\cite[Lemma 1.13.1]{GoeRei84}}]\label{char-sunny}
Let $Q: C\to E$ be a retraction. Then $Q$ is sunny and nonexpansive if and only if for all $x\in C$ and $y \in E$,
$$\langle x-Qx,j(y-Qx)\rangle \leq 0.$$
\end{proposition}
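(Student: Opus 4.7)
The plan is to prove both implications separately, using two standard facts about the normalized duality map in a smooth Banach space: first, that $j$ is odd (so $j(-u) = -j(u)$, which follows from the uniqueness of the duality selection in smooth spaces); and second, the Gâteaux differentiation formula
$$\lim_{t \to 0^+}\frac{\|u+tv\|^2 - \|u\|^2}{2t} = \langle v, j(u)\rangle,$$
which holds in every smooth space and expresses that $\tfrac12\|\cdot\|^2$ is Gâteaux-differentiable with derivative $j$.

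For the forward direction, fix $x \in C$ and $y \in E$. For $t \in [0,1]$ the point $Qx + t(x - Qx) = (1-t)Qx + tx$ lies in $C$ by convexity, since $Qx \in E \subseteq C$ and $x \in C$, so sunniness forces $Q(Qx + t(x-Qx)) = Qx$. Using $Qy = y$ (retraction) and nonexpansiveness we get
$$\|y - Qx\|^2 \leq \|y - Qx - t(x - Qx)\|^2.$$
Subtracting, dividing by $t > 0$, and letting $t \to 0^+$, the differentiation formula applied with $u := y - Qx$ and $v := -(x - Qx)$ yields $-2\langle x - Qx, j(y - Qx)\rangle \geq 0$, as required.

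For the reverse direction, assume the inequality. To prove nonexpansiveness, take $x_1, x_2 \in C$ and apply the hypothesis to the pairs $(x_1, Qx_2)$ and $(x_2, Qx_1)$ (both valid because $Qx_1, Qx_2 \in E$). Adding the two inequalities and using $j(Qx_1 - Qx_2) = -j(Qx_2 - Qx_1)$ gives $\|Qx_1 - Qx_2\|^2 \leq \langle x_1 - x_2, j(Qx_1 - Qx_2)\rangle$, and the conclusion follows from the Cauchy--Schwarz-type bound $\langle u, j(v)\rangle \leq \|u\|\|v\|$. To prove sunniness, fix $x \in C$ and $t \geq 0$ with $z := Qx + t(x - Qx) \in C$, and observe the algebraic identity $z - Qz = (Qx - Qz) + t(x - Qx)$. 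Applying the hypothesis to $(z, Qx)$ and taking the duality pairing with $j(Qx - Qz)$ gives
$$\|Qx - Qz\|^2 + t\langle x - Qx, j(Qx - Qz)\rangle \leq 0,$$
while applying the hypothesis to $(x, Qz)$ and using the oddness of $j$ yields $\langle x - Qx, j(Qx - Qz)\rangle \geq 0$. Combining these two inequalities with $t \geq 0$ forces $\|Qx - Qz\|^2 \leq 0$, so $Qz = Qx$.

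The main obstacle is the sunniness half of the reverse direction: one has to pick the two right instances of the hypothesis so that the cross-term involving $\langle x - Qx, j(Qx - Qz)\rangle$ cancels with the correct sign, and to handle the sign change one needs the oddness of $j$, which rests on smoothness. Everything else is routine manipulation of the duality pairing and of the Gâteaux derivative formula for the squared norm.
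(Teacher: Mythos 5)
Your proof is correct in all three pieces (forward direction via the Gâteaux differentiability of $\tfrac12\|\cdot\|^2$, reverse direction for nonexpansiveness via oddness of $j$ and the Cauchy--Schwarz bound, reverse direction for sunniness via the identity $z-Qz=(Qx-Qz)+t(x-Qx)$ and two instances of the hypothesis). The paper does not reproduce a proof of this proposition but cites \cite[Lemma 1.13.1]{GoeRei84}; your argument is essentially the standard one found there, so there is nothing to flag.
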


\begin{proposition}
There is at most one sunny nonexpansive retraction from $C$ to $E$.
\end{proposition}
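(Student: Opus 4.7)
The plan is to apply the characterization of sunny nonexpansive retractions from Proposition \ref{char-sunny} twice and then combine the two resulting inequalities, exploiting the standard fact that the normalized duality mapping $j$ is odd, i.e.\ $j(-u) = -j(u)$ for all $u \in X$.

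More precisely, suppose $Q_1, Q_2 : C \to E$ are two sunny nonexpansive retractions. Fix an arbitrary $x \in C$. Since $Q_2 x \in E$, Proposition \ref{char-sunny} applied to $Q_1$ with $y := Q_2 x$ yields
$$\langle x - Q_1 x, j(Q_2 x - Q_1 x) \rangle \leq 0.$$
Symmetrically, since $Q_1 x \in E$, the same proposition applied to $Q_2$ with $y := Q_1 x$ gives
$$\langle x - Q_2 x, j(Q_1 x - Q_2 x) \rangle \leq 0,$$
which, using oddness of $j$, is equivalent to
$$\langle Q_2 x - x, j(Q_2 x - Q_1 x) \rangle \leq 0.$$

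Adding the two inequalities, the $x$-terms cancel and we obtain
$$\langle Q_2 x - Q_1 x, j(Q_2 x - Q_1 x) \rangle \leq 0,$$
i.e.\ $\|Q_2 x - Q_1 x\|^2 \leq 0$ by the defining property of the normalized duality mapping. Hence $Q_1 x = Q_2 x$, and since $x \in C$ was arbitrary, $Q_1 = Q_2$.

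I do not anticipate any genuine obstacle here: the proof is a textbook application of the variational characterization in Proposition \ref{char-sunny}, and the only non-trivial ingredient is that $j$ is odd, which is standard for smooth spaces (where $j$ is single-valued) and implicit in the setup of this section.
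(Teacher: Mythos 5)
Your proof is correct and is the standard argument: apply the variational characterization (Proposition \ref{char-sunny}) with the roles of the two retractions swapped, use the oddness (homogeneity) of $j$, and add to get $\|Q_1x - Q_2x\|^2 \leq 0$. The paper simply cites \cite[Proposition 2.17]{KohSip21} for this, and that reference proves it in essentially the same way, so your approach matches.
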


\begin{proof}
See \cite[Proposition 2.17]{KohSip21}.
\end{proof}

More information about sunny nonexpansive retractions may be found in the paper \cite{KopRei}.

\subsection{Pseudocontractions}

\begin{definition}[{\cite[Definition 1]{Bro67}}]
A map $T:C \to C$ is called a {\bf pseudocontraction} if for all $x$, $y \in C$ and $t >0$, we have that
\begin{equation}
t\|x-y\| \leq \|(t+1)(x-y) - (Tx-Ty)\|.\label{def-psc}
\end{equation}
\end{definition}

\begin{proposition}
Any nonexpansive map is a pseudocontraction.
\end{proposition}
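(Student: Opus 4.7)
The plan is very short: unpack the definition of pseudocontraction and control the right-hand side from below using the reverse triangle inequality, and then use the nonexpansiveness of $T$ to complete the estimate.

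More precisely, fix $T : C \to C$ nonexpansive, let $x,y \in C$ and $t > 0$. I would first rewrite the norm appearing on the right-hand side of \eqref{def-psc} by viewing $(t+1)(x-y) - (Tx-Ty)$ as the difference of the vectors $(t+1)(x-y)$ and $Tx-Ty$. The reverse triangle inequality then gives
$$\|(t+1)(x-y) - (Tx-Ty)\| \geq (t+1)\|x-y\| - \|Tx-Ty\|.$$
At this point nonexpansiveness of $T$ yields $\|Tx-Ty\| \leq \|x-y\|$, so the right-hand side is bounded below by $(t+1)\|x-y\| - \|x-y\| = t\|x-y\|$, which is exactly the inequality \eqref{def-psc} required for $T$ to be a pseudocontraction.

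There is no real obstacle here; the only thing to be mindful of is the direction of the inequality (it is the lower bound on the norm of a difference that is needed, i.e.\ $\|u-v\| \geq \|u\| - \|v\|$, applied with $u = (t+1)(x-y)$ and $v = Tx-Ty$), and the fact that the statement is required for \emph{all} $t>0$, which is automatic since our estimate does not depend on the particular value of $t$ beyond the algebraic identity $(t+1) - 1 = t$.
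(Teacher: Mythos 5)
Your argument is correct: the reverse triangle inequality applied to $(t+1)(x-y)$ and $Tx-Ty$, together with nonexpansiveness, immediately yields $\|(t+1)(x-y)-(Tx-Ty)\| \geq (t+1)\|x-y\| - \|Tx-Ty\| \geq t\|x-y\|$. This is the standard elementary argument (the paper simply defers to \cite[Proposition~2.21]{KohSip21}), so you are in line with the intended proof.
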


\begin{proof}
See \cite[Proposition 2.21]{KohSip21}.
\end{proof}

We have the following equivalence.

\begin{proposition}[{\cite[Proposition 1]{Bro67}}]
Let $T: C\to C$. Then $T$ is a pseudocontraction if and only if for all $x$, $y \in C$,
$$\langle Tx-Ty,j(x-y) \rangle \leq \|x-y\|^2.$$
\end{proposition}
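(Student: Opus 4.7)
The plan is to recast the pseudocontraction inequality \eqref{def-psc} in a form directly suited to the duality mapping. Set $u := x-y$ and $v := Tx - Ty$; the case $u=0$ being trivial, assume $u \neq 0$. Dividing \eqref{def-psc} through by $t$ and substituting $s := 1/t$, one sees that \eqref{def-psc} holding for all $t>0$ is equivalent to
$$\|u\| \leq \|u + s(u-v)\| \quad \text{for all } s > 0,$$
so the task reduces to proving the equivalence of this with $\langle v, j(u) \rangle \leq \|u\|^2$, i.e.\ with $\langle u-v, j(u) \rangle \geq 0$.

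For the easy ($\Leftarrow$) direction I would exploit the identities $\langle u, j(u)\rangle = \|u\|^2$ and $\|j(u)\| = \|u\|$, combined with the hypothesis $\langle u-v, j(u)\rangle \geq 0$, to compute
$$t\|u\|^2 = \langle tu,\, j(u)\rangle \leq \langle (t+1)u - v,\, j(u)\rangle \leq \|(t+1)u - v\| \cdot \|u\|,$$
which, upon dividing by $\|u\|$, yields \eqref{def-psc}.

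The main obstacle is the converse direction, for which I would consider the function $f : [0,\infty) \to [0,\infty)$ defined by $f(s) := \|u + s(u-v)\|^2$. The assumed inequality says $f(s) \geq f(0)$ for all $s \geq 0$, hence the one-sided derivative at $0$ satisfies $f'(0^+) \geq 0$. The point is that in a smooth Banach space this derivative exists and equals $2\langle u-v, j(u)\rangle$, giving the desired conclusion. This is precisely the step that uses the standing hypothesis that $X$ is smooth: smoothness both ensures the single-valuedness of $j$ and supplies the Gâteaux derivative formula $\frac{d}{ds}\|x+sy\|^2\big|_{s=0} = 2\langle y, j(x)\rangle$ needed to convert the variational inequality into the desired inner-product inequality.
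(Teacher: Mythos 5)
The paper does not reproduce a proof of this proposition---it simply cites Browder's original result---so there is no in-paper argument to compare against, and the question is only whether your argument is sound, which it is. Your reduction via $u:=x-y$, $v:=Tx-Ty$, $s:=1/t$ correctly rewrites the defining inequality \eqref{def-psc} as the Kato-type condition $\|u\|\le\|u+s(u-v)\|$ for all $s>0$; the easy direction is handled correctly by $t\|u\|^2=\langle tu,j(u)\rangle\le\langle(t+1)u-v,j(u)\rangle\le\|(t+1)u-v\|\,\|u\|$ (using $\langle u,j(u)\rangle=\|u\|^2$, $\|j(u)\|=\|u\|$, and the hypothesis); and the converse correctly invokes the Gâteaux differentiability of $\|\cdot\|^2$ at the nonzero point $u$, which is precisely what smoothness supplies, to obtain $0\le f'(0^+)=2\langle u-v,j(u)\rangle$. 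This is essentially Kato's classical lemma specialized to the smooth (single-valued $j$) setting, which is the argument the cited reference itself relies on, so the approach is the standard one and there is no gap.
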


\begin{definition}[{cf. \cite[(2.9)]{Gus67}}]
Let $k \in (0,1)$. We say that a map $T: C \to C$ is a {\bf $k$-strong pseudocontraction} if for all $x$, $y \in C$,
$$\langle Tx-Ty,j(x-y) \rangle \leq k\|x-y\|^2.$$
\end{definition}

\begin{proposition}\label{sps1}
Let $T: C \to C$ be a continuous pseudocontraction, $k \in (0,1)$ and $u \in C$. Define the map $U: C \to C$, by putting, for all $x \in C$, $Ux:=kTx+(1-k)u$. Then $U$ is a continuous $k$-strong pseudocontraction.
\end{proposition}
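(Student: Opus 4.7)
The plan is to verify three things: that $U$ actually maps $C$ into $C$, that $U$ is continuous, and that $U$ satisfies the $k$-strong pseudocontraction inequality from the definition just introduced. None of these steps looks delicate; the proposition is essentially a bookkeeping step preparing $U$ for a Banach-style fixed point argument later in the paper.

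First, I would check the self-map property: for any $x \in C$, the element $Ux = kTx + (1-k)u$ is a convex combination of $Tx \in C$ and $u \in C$, so $Ux \in C$ because $C$ is convex. Continuity of $U$ is then immediate, since $U$ is the sum of the continuous map $kT$ and the constant $(1-k)u$.

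For the main inequality, I would first compute
\[
Ux - Uy = (kTx + (1-k)u) - (kTy + (1-k)u) = k(Tx - Ty),
\]
so that, using the linearity of the functional $j(x-y) \in X^*$,
\[
\langle Ux - Uy, j(x-y) \rangle = k\langle Tx - Ty, j(x-y) \rangle.
\]
Applying the characterization of pseudocontractions (the equivalence stated via $\langle Tx-Ty, j(x-y)\rangle \leq \|x-y\|^2$) to $T$ and multiplying by $k \in (0,1)$ yields
\[
\langle Ux - Uy, j(x-y) \rangle \leq k\|x-y\|^2,
\]
which is exactly the $k$-strong pseudocontraction condition. I do not anticipate any real obstacle; the only minor point to keep track of is the linearity of $j(x-y)$ used when factoring out the scalar $k$.
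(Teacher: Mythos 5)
Your proof is correct and is the standard argument: the self-map and continuity parts follow from convexity of $C$ and continuity of $T$, and the key inequality follows by factoring $Ux-Uy=k(Tx-Ty)$, using linearity of the functional $j(x-y)$, and applying the pseudocontraction characterization of $T$. This is essentially the same route the paper takes (via the cited Proposition 2.24 of Kohlenbach--Sipo\c s).
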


\begin{proof}
See \cite[Proposition 2.24]{KohSip21}.
\end{proof}

\begin{proposition}\label{sps2}
Let $k \in (0,1)$ and $T: C \to C$ be a continuous $k$-strong pseudocontraction. Then $T$ has a unique fixed point.
\end{proposition}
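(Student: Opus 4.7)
The proof splits naturally into uniqueness and existence.

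For \emph{uniqueness}: if $x,y \in C$ both satisfy $Tx = x$ and $Ty = y$, substituting into the defining inequality yields
\[
\|x - y\|^{2} = \langle x - y, j(x - y)\rangle = \langle Tx - Ty, j(x - y)\rangle \leq k\,\|x - y\|^{2},
\]
and since $k < 1$ this forces $x = y$.

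For \emph{existence}: the hypothesis can be rephrased as saying that $A := I - T \colon C \to X$ is continuous and $(1-k)$-strongly accretive, so fixed points of $T$ correspond exactly to zeros of $A$ in $C$. My plan is to produce such a zero via the Banach contraction principle applied to the resolvent. Specifically, fix $\alpha > 0$ and, for each $z \in C$, consider the auxiliary self-map $\Phi_z(x) := \tfrac{1}{1+\alpha}(z + \alpha T x)$ of $C$, which is well-defined as a convex combination of $z \in C$ and $Tx \in C$; its fixed points are precisely the $x \in C$ solving the resolvent equation $(I + \alpha A)x = z$. Uniqueness of such an $x$ follows from the equivalent metric form
\[
\|x - y\| \leq \|(1 + \alpha k)(x - y) - \alpha(T x - T y)\|
\]
of the $k$-strong pseudocontraction hypothesis. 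Once existence of $J_{\alpha A} z := (I + \alpha A)^{-1} z \in C$ is established for all $z \in C$, the resolvent $J_{\alpha A} \colon C \to C$ is a genuine contraction with factor $\tfrac{1}{1 + \alpha(1 - k)}$ (as the resolvent of a strongly accretive operator), so by Banach's theorem it has a unique fixed point $x^{\ast} \in C$, which necessarily satisfies $A x^{\ast} = 0$, i.e.\ $T x^{\ast} = x^{\ast}$.

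The principal obstacle is thus the well-definedness of $J_{\alpha A}$ on $C$, or equivalently the existence of a fixed point of $\Phi_z$ in the absence of a Lipschitz hypothesis on $T$. I would handle this via the standard argument underlying the classical existence theorems (Browder, Kato, Deimling, Morales) for zeros of continuous strongly accretive operators: approximate $T$ uniformly on bounded subsets by a sequence of Lipschitz $k$-strong pseudocontractions on $C$ (for which $\Phi_z$ is a genuine strict contraction and Banach's theorem applies directly), obtain the corresponding approximate fixed points, and pass to the limit using the uniform expansivity $\|x - y\| \leq \tfrac{1}{1 - k}\|A x - A y\|$ inherited from strong accretivity, which forces the approximating sequence to be Cauchy and its limit to be a genuine fixed point of $\Phi_z$.
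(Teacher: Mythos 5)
Your uniqueness argument is exactly the paper's: plug the fixed points into the defining inequality and use $k<1$. The divergence is entirely in the existence part. The paper disposes of existence in one line by citing Martin's Proposition~3 or Deimling's Corollary~1 together with the convexity of $C$ --- these are off-the-shelf existence theorems for zeros of continuous strongly accretive operators, and convexity of $C$ plus $T(C)\subseteq C$ supplies the requisite flow-invariance/weak-inwardness boundary condition, so there is nothing more to do.

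Your self-contained route through the resolvent is sound in outline --- the computations that $J_{\alpha A}$ is a $\tfrac{1}{1+\alpha(1-k)}$-contraction and that $\|x-y\|\leq\tfrac{1}{1-k}\|Ax-Ay\|$ are correct, and the Cauchy argument you sketch for passing to the limit does go through once the approximants exist --- but it collapses exactly at the step you flag: solvability of $(I+\alpha A)x=z$ for a merely \emph{continuous} $T$. The proposed fix, ``approximate $T$ uniformly (on bounded subsets) by Lipschitz $k$-strong pseudocontractions mapping $C$ into $C$,'' is a genuine gap, not a routine standard argument. None of the classical theorems you name (Browder, Kato, Deimling, Morales) proceed by such an approximation; they use degree theory, flow invariance for differential equations on closed sets (Martin), or connectedness/Zorn-type arguments. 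Uniformly approximating a merely continuous map on an infinite-dimensional closed convex set by \emph{globally} Lipschitz maps is already nontrivial, and there is no reason the approximants should again be $k$-strong pseudocontractions (or even $k'$-strong for some $k'<1$) that still map $C$ into $C$ --- the Lasota--Yorke type approximation results produce locally Lipschitz maps with no control on the pseudocontraction constant. As written, then, this step would need its own theorem, and it is precisely the content of the references the paper invokes; your argument does not circumvent them so much as silently assume a harder statement.
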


\begin{proof}
If $x$ and $y$ are fixed points of $T$, $\|x-y\|^2\leq k\|x-y\|^2$, so $x=y$. The existence of a fixed point follows from \cite[Proposition 3]{Mar73} (or \cite[Corollary 1]{Dei}) and the convexity of $C$.
\end{proof}

\subsection{Accretive operators}

\begin{definition}
An operator $A \subseteq X \times X$ is called {\bf accretive} if one of the following equivalent conditions holds (the equivalence follows from the classical lemma of Kato \cite[Lemma 1.1]{Kato}):
\begin{itemize}
\item for all $\lambda>0$ and all $(x_1,y_1)$, $(x_2,y_2)\in A$,
$$\|x_1 -x_2 \| \leq \|x_1 - x_2 + \lambda (y_1-y_2)\|;$$
\item for all $(x_1,y_1)$, $(x_2,y_2)\in A$,
$$\langle y_1-y_2, j(x_1-x_2) \rangle \geq 0.$$
\end{itemize}
\end{definition}

\begin{remark}
If $A \subseteq X \times X$ is accretive and $\lambda >0$, then $\lambda A$ is also accretive.
\end{remark}

\begin{definition}
An operator $A \subseteq X \times X$ is called {\bf $m$-accretive} if it is accretive and for all $\lambda >0$, $ran(Id+\lambda A)=X$.
\end{definition}

\begin{remark}
If $A \subseteq X \times X$ is $m$-accretive and $\lambda >0$, then $\lambda A$ is also $m$-accretive.
\end{remark}

\begin{definition}
If $A \subseteq X \times X$ is accretive, one defines the {\bf resolvent} of $A$, $J_A : ran(Id+A) \to X$, for every $x \in ran(Id+A)$, by setting $J_Ax$ to be the unique $y \in X$ such that there is a $z \in X$ with $(y,z) \in A$ and $y+z=x$.
\end{definition}

\begin{remark}
If $A \subseteq X \times X$ is accretive, then $J_A$ is nonexpansive and its fixed points coincide with the zeros of $A$.
\end{remark}

From now on, if we have a mapping $T:C \to C$, by $A:=Id-T$ we shall mean 
$$A:=\{(x,x-Tx) \mid x \in C\}.$$

\begin{remark}
Let $T: C \to C$ be a pseudocontraction and $A:=Id-T$. Then $A$ is accretive and the zeros of $A$ coincide with the fixed points of $T$.
\end{remark}

\begin{proposition}\label{psc-acc}
Let $T: C \to C$ be a pseudocontraction and $A:=Id-T$. Let $x \in C$ and $\lambda >0$. Set $t:=\frac{\lambda}{1+\lambda} \in (0,1)$ and let $x_t \in C$ be such that $x_t = tTx_t + (1-t)x$ (if $T$ is continuous, a unique such point exists by Propositions~\ref{sps1} and \ref{sps2}). Then $x \in ran(Id+\lambda A)$ and $J_{\lambda A}x =x_t$.
\end{proposition}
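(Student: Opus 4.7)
The plan is to unpack the definition of the resolvent and reduce the claim to a direct algebraic verification. Writing $A = \{(y, y-Ty) \mid y \in C\}$ and noting $\lambda A = \{(y, \lambda(y-Ty)) \mid y \in C\}$, the assertion $x \in \mathrm{ran}(Id+\lambda A)$ with $J_{\lambda A}x = x_t$ amounts to exhibiting a unique $y \in C$ and a $z \in X$ such that $(y,z) \in \lambda A$ and $y + z = x$. The second clause forces $z = \lambda(y-Ty)$, so the system collapses to the single fixed-point equation
$$(1+\lambda)y - \lambda Ty = x,$$
which, after dividing by $1+\lambda$ and setting $t = \lambda/(1+\lambda)$, is exactly
$$y = tTy + (1-t)x.$$

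Existence is then handed to us by hypothesis: $x_t \in C$ satisfies this equation by definition, so taking $z := \lambda(x_t - Tx_t)$ gives $(x_t, z) \in \lambda A$ with $x_t + z = x$, proving $x \in \mathrm{ran}(Id+\lambda A)$ and identifying $x_t$ as one legitimate value of $J_{\lambda A}x$. The only remaining point is to see that no other $y$ works, which is where the fact that $\lambda A$ is accretive (as noted in the preceding remark) comes in: given two candidates $(y_1,z_1),(y_2,z_2) \in \lambda A$ with $y_1+z_1 = y_2+z_2 = x$, the accretivity inequality applied with parameter $1$ yields $\|y_1 - y_2\| \leq \|y_1 - y_2 + z_1 - z_2\| = 0$, so $y_1 = y_2$. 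Hence $J_{\lambda A}x = x_t$.

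I expect no real obstacle here; the proof is essentially a bookkeeping exercise identifying the resolvent equation with Browder–Halpern-style approximating equation via the substitution $t = \lambda/(1+\lambda)$. The only subtle point worth flagging in writing is to justify that we really may restrict the search for $y$ to $C$, which is forced by the convention $A \subseteq C \times X$ established just before the proposition, and to cite Propositions~\ref{sps1} and \ref{sps2} in the parenthetical remark ensuring the uniqueness of $x_t$ itself (so that the notation in the statement is well-posed).
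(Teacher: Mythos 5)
Your proposal is correct and follows essentially the same route as the paper: rearranging $x_t = tTx_t + (1-t)x$ into $x_t + \lambda(x_t - Tx_t) = x$ and noting $(x_t, x_t - Tx_t) \in A$. The paper stops there, since uniqueness of $y$ is already built into the definition of the resolvent (and guaranteed by the accretivity of $\lambda A$, which is observed in an earlier remark); your explicit verification of that uniqueness via the accretivity inequality with parameter $1$ is a correct, if redundant, elaboration.
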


\begin{proof}
Since
$$x_t=\frac{\lambda}{1+\lambda}Tx_t + \frac{1}{1+\lambda}x ,$$
we have that
$$(1+\lambda)x_t=\lambda Tx_t + x ,$$
so
$$x_t+\lambda(x_t-Tx_t) = x.$$
Since $(x_t, x_t-Tx_t) \in A$, the conclusion follows.
\end{proof}

\begin{corollary}\label{psc-acc-cor}
Let $T: C \to C$ be a continuous pseudocontraction and $A:=Id-T$. Let $\lambda >0$. Then $C\subseteq ran(Id+\lambda A)$ and, for all $y \in C$, $J_{\lambda A}y \in C$.
\end{corollary}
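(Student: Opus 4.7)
The plan is to derive this directly from Proposition~\ref{psc-acc} by noting that the latter, although phrased for a fixed $x \in C$, already does all the real work. Given $\lambda > 0$ and an arbitrary $y \in C$, I would set $t := \lambda/(1+\lambda) \in (0,1)$ and invoke Propositions~\ref{sps1} and \ref{sps2}: the auxiliary map $U: C \to C$ defined by $Ux := tTx + (1-t)y$ is a continuous $t$-strong pseudocontraction (this is precisely what Proposition~\ref{sps1} gives us, with $k := t$ and $u := y$), hence by Proposition~\ref{sps2} it has a (unique) fixed point $x_t \in C$, which satisfies $x_t = tTx_t + (1-t)y$.

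With $x_t \in C$ now in hand, Proposition~\ref{psc-acc} applied to the point $x := y$ yields immediately that $y \in ran(Id + \lambda A)$ and that $J_{\lambda A} y = x_t$. Since $y$ was arbitrary in $C$, the first assertion $C \subseteq ran(Id + \lambda A)$ follows, and since $x_t$ was produced as a fixed point of a self-map of $C$, we also have $J_{\lambda A} y = x_t \in C$, which is the second assertion.

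There is no genuine obstacle here: the only point to check is that the existence clause mentioned parenthetically in Proposition~\ref{psc-acc} (``if $T$ is continuous, a unique such point exists'') is applicable in the present setting, which is the case because $T: C \to C$ is assumed continuous and $C$ is closed, convex, and nonempty, matching the hypotheses needed for Propositions~\ref{sps1} and \ref{sps2}.
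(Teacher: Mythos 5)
Your proof is correct and is exactly the derivation the paper has in mind: the corollary is stated without proof precisely because it follows from Proposition~\ref{psc-acc} once the parenthetical existence remark (via Propositions~\ref{sps1} and \ref{sps2}) is invoked for each $y \in C$. Your observation that $x_t \in C$ because it arises as a fixed point of the self-map $U : C \to C$ correctly closes the second assertion.
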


More information on accretive operators and their resolvents can be found in the paper \cite{Rei80b}.

\section{Modifications to the proofs}\label{modifs}

The main qualitative theorem that we focus on is the following (note that, compared to \cite{KohSip21}, we have removed the bounding condition on the elements of $C$, leaving just the bounding of the diameter, as said condition does not feature at all in the proof).

\begin{theorem}[{cf. \cite{Rei80}; see also \cite{BruRei}}]\label{main-thm}
Let $X$ be a Banach space which is uniformly convex and uniformly smooth. Let $C \subseteq X$ be a closed, convex, bounded, nonempty subset. Let $A \subseteq X \times X$ be an accretive operator such that, for all $\lambda > 0$, we have that $C\subseteq ran(Id+\lambda A)$ and that, for all $y \in C$, $J_{\lambda A}y \in C$. Let $x \in C$.

Then, for all $(\lambda_n) \subseteq (0,\infty)$ such that $\lim\limits_{n \to \infty} \lambda_n = \infty$, we have that $(J_{\lambda_n A} x)$ is Cauchy.
\end{theorem}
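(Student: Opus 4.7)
The plan is to follow the structure of Reich's original proof in \cite{Rei80} and its refinement in \cite{KohSip21}, systematically replacing the pseudocontractive equation $x_t = tTx_t + (1-t)x$ by the resolvent equation. Concretely, for $\lambda > 0$ set $y_\lambda := J_{\lambda A}x$, which lies in $C$ by hypothesis, and let $z_\lambda := (x - y_\lambda)/\lambda$, so that $(y_\lambda, z_\lambda) \in A$ by the definition of the resolvent. Writing $d$ for the diameter of $C$, we have $\|z_\lambda\| \leq d/\lambda \to 0$ as $\lambda \to \infty$, so that the resolvent curve is asymptotically ``pushed into the zero set of $A$''.

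The main analytic input is Kato's characterisation of accretivity: for every $(u, v) \in A$,
$$\langle z_\lambda - v,\, j(y_\lambda - u) \rangle \geq 0.$$
Expanding $z_\lambda$ and using $\langle x - y_\lambda, j(y_\lambda - u) \rangle = \langle x - u, j(y_\lambda - u) \rangle - \|y_\lambda - u\|^2$, this rearranges to the key inequality
$$\|y_\lambda - u\|^2 \leq \langle x - u - \lambda v,\, j(y_\lambda - u) \rangle.$$
When $u$ is a zero of $A$ (so $v = 0$), this specializes to the comparison estimate that plays the role of the analogous bound for $x_t$ against fixed points of $T$ in \cite{KohSip21}.

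Since $X$ is uniformly smooth and hence reflexive, any sequence $\lambda_n \to \infty$ admits a weakly convergent subsequence $y_{\lambda_{n_k}} \rightharpoonup p \in C$. Using the norm-to-norm uniform continuity of $j$ on bounded sets (the content of \cite[Proposition 2.12]{KohSip21}) together with $\|z_{\lambda_{n_k}}\| \to 0$, one passes to the limit in Kato's inequality to conclude that $p$ is a zero of $A$ (so in particular the zero set of $A$ is nonempty). Substituting $u = p$ in the key inequality and taking limits along the subsequence then yields $\langle x - p,\, j(u - p) \rangle \leq 0$ for every zero $u$ of $A$. By Proposition~\ref{char-sunny}, $p$ is uniquely determined as the image of $x$ under the sunny nonexpansive retraction onto the zero set of $A$, so every weak cluster point of $(y_{\lambda_n})$ coincides. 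Uniform convexity, through Proposition~\ref{l3-a} and the Kadec--Klee property it entails, upgrades this to strong convergence, delivering the Cauchy property.

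The principal obstacle, mirroring the paper's stated strategy, is precisely the demiclosedness-style step that passes the key inequality through the weak limit and identifies $p$ as a zero of $A$. In \cite{KohSip21} this relied on the bound $\|x_t - Tx_t\| \to 0$ and the continuity of $T$; here the ``residual'' $z_\lambda$ lies in the image of $y_\lambda$ under $A$ but has no description purely in terms of a self-map, so one must reason directly with pairs in the graph of $A$ and exploit only the accretivity inequality, the resolvent identity, and the quantitative continuity of $j$ coming from uniform smoothness. Quantifying this step, and in particular eliminating any reliance on a continuity modulus of $T$ as announced in the introduction, will be the principal work of Section~\ref{modifs}.
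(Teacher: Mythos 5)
Your proposal substitutes a weak-compactness argument for the paper's limsup (Banach-limit--style) argument, and that substitution introduces a genuine gap that cannot be repaired in the generality claimed. After extracting a weak subsequential limit $y_{\lambda_{n_k}}\rightharpoonup p$, you pass to the limit in expressions of the form $\langle\,\cdot\,,j(y_{\lambda_{n_k}}-u)\rangle$: once to identify $p$ as a zero of $A$, and again to obtain $\langle x-p,j(u-p)\rangle\leq 0$ and to force $\|y_{\lambda_{n_k}}-p\|\to 0$ via the key inequality $\|y_\lambda-p\|^2\leq\langle x-p,j(y_\lambda-p)\rangle$. But the norm-to-norm uniform continuity of $j$ supplied by uniform smoothness is of no help under merely weak convergence of $y_{\lambda_{n_k}}$; what these steps actually require is weak-to-weak$^*$ sequential continuity of the duality map, which fails in $L^p$ for $p\neq 2$ (equivalently, such spaces lack Opial's condition). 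In particular $y_{\lambda_{n_k}}\rightharpoonup p$ does \emph{not} give $j(y_{\lambda_{n_k}}-p)\to 0$ in any useful topology, so the crucial estimate $\langle x-p,j(y_{\lambda_{n_k}}-p)\rangle\to 0$, which is precisely what delivers strong convergence in the Hilbert case, has no analogue here. This obstruction is exactly what the paper's introduction alludes to when it notes that for thirteen years after Browder and Halpern no such result was known in any $L^p$ space other than $L^2$: the Hilbert-space proof you are transplanting does not survive the passage to uniformly smooth Banach spaces.

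The paper's proof (following Morales/Reich, as modified in Section~\ref{modifs}) avoids weak compactness entirely. One instead minimizes (approximately) the functional $u\mapsto\limsup_n\|x_n-u\|^2$ over $C$, shows via $\|x_n-J_Ax_n\|\leq b/\lambda_n\to 0$ and uniform convexity that an ($\eps$-)minimizer $u$ can be taken with $\|u-J_Au\|$ small, and then uses the differentiability coming from uniform smoothness at the minimizer to derive $\limsup_n\langle x-p,j(x_n-p)\rangle\leq 0$ directly, without ever needing $j$ to respect weak limits. This is then summed with the accretivity estimate $\langle x_n-x,j(x_n-p)\rangle\leq 0$ (the one part of your proposal that does match the paper) to conclude. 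Your resolvent bookkeeping -- the identities $(y_\lambda,(x-y_\lambda)/\lambda)\in A$, $\|z_\lambda\|\leq b/\lambda$, and the Kato inequality expansion -- is correct and agrees with Section~\ref{morales}, but the convergence engine you bolt onto it would need to be replaced by the limsup-minimization argument to actually prove the theorem in the stated generality.
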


We first list some corollaries, which show how one can weaken the boundedness constraint and how one can recover the result for pseudocontractions as stated in \cite{KohSip21} (where, in particular, we shall no longer need uniform continuity, either qualitatively or quantitatively, but just the plain assumption of continuity).

\begin{corollary}\label{cor-ubou}
Let $X$ be a Banach space which is uniformly convex and uniformly smooth. Let $C \subseteq X$ be a closed, convex, nonempty subset. Let $A \subseteq X \times X$ be an accretive operator such that, for all $\lambda > 0$, we have that $C\subseteq ran(Id+\lambda A)$ and that, for all $y \in C$, $J_{\lambda A}y \in C$ (in particular, if $C=X$, then $A$ is $m$-accretive). Let $x \in C$. Let $p \in C$ be a zero of $A$.

Then, for all $(\lambda_n) \subseteq (0,\infty)$ such that $\lim\limits_{n \to \infty} \lambda_n = \infty$, we have that $(J_{\lambda_n A} x)$ is Cauchy.
\end{corollary}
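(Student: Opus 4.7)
The plan is to reduce the corollary to the bounded case covered by Theorem~\ref{main-thm} by localizing to a ball around the given zero $p$, and showing that the resolvents never push iterates out of that ball. The main observation is that $p$ being a zero of $A$ means $(p,0)\in A$, hence $(p,0)\in \lambda A$ for every $\lambda>0$, so $p\in ran(Id+\lambda A)$ and $J_{\lambda A}p=p$ for all $\lambda>0$. Combined with the nonexpansivity of $J_{\lambda A}$ recorded in the remark after the definition of the resolvent, this immediately yields
$$\|J_{\lambda A}y - p\| = \|J_{\lambda A}y - J_{\lambda A}p\| \leq \|y-p\|$$
for every $y \in ran(Id+\lambda A)$.

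Set $r:=\|x-p\|$ (if $r=0$ the conclusion is trivial since then $J_{\lambda_n A}x=p$ for all $n$), and define
$$C':= C\cap \overline{B}(p,r).$$
First I would verify that $C'$ is a closed, convex, bounded, nonempty subset of $X$: it is nonempty because $p,x\in C'$, closed and convex as the intersection of two such sets, and bounded of diameter at most $2r$. Next, for any $\lambda>0$, the hypothesis gives $C'\subseteq C \subseteq ran(Id+\lambda A)$. Finally, I would check the invariance $J_{\lambda A}(C')\subseteq C'$: for $y\in C'$ one has $J_{\lambda A}y\in C$ by hypothesis, and by the inequality displayed above $\|J_{\lambda A}y - p\|\leq \|y-p\|\leq r$, so $J_{\lambda A}y\in \overline{B}(p,r)$, hence in $C'$.

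With these verifications in place, the triple $(C',A,x)$ satisfies every hypothesis of Theorem~\ref{main-thm}, and an immediate application yields the Cauchy property of $(J_{\lambda_n A}x)$. I do not expect any real obstacle: the only subtlety is recognising that we only need to control the orbit $(J_{\lambda_n A}x)$ rather than $C$ itself, and that the zero $p$ together with nonexpansivity of $J_{\lambda A}$ provides exactly such a uniform bound, so localizing to $C\cap \overline{B}(p,r)$ is both legitimate (it preserves the range and invariance conditions) and sufficient (it is bounded). The parenthetical observation that $C=X$ forces $A$ to be $m$-accretive is already part of the hypothesis, and does not need to be proved.
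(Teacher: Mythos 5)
Your proof is correct and takes essentially the same approach as the paper: localize to $D = C \cap \overline{B}(p,\cdot)$, verify that the hypothesis $C \subseteq \mathrm{ran}(\mathrm{Id}+\lambda A)$ is inherited and that the nonexpansivity of $J_{\lambda A}$ (together with $J_{\lambda A}p = p$) gives invariance of the ball, then apply Theorem~\ref{main-thm}. The only cosmetic difference is that the paper fixes $b \in \N^*$ with $\|x-p\| \leq b$ rather than using the exact radius $r=\|x-p\|$, which is immaterial for this qualitative statement.
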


\begin{proof}
Let $b \in \N^*$ be such that $\|x-p\| \leq b$. Put $D:= C \cap \overline{B}(p,b)$, so $x \in D$. We shall apply Theorem~\ref{main-thm} with $C:=D$, from which we will get our conclusion. The only non-trivial condition to check is that, for all $\lambda >0$ and $y \in D$, $J_{\lambda A}y \in D$. Let $\lambda>0$ and $y \in D$. Then $y \in C$, so $J_{\lambda A}y \in C$. Since $\|J_{\lambda A}y - p \|\leq \|y-p\| \leq b$, we have that $J_{\lambda A}y\in \overline{B}(p,b)$, so $J_{\lambda A}y \in D$.
\end{proof}

\begin{corollary}
Let $X$ be a Banach space which is uniformly convex and uniformly smooth. Let $C \subseteq X$ be a closed, convex, bounded, nonempty subset. Let $T: C \to C$ be a pseudocontraction and $x \in C$. For all $t \in (0,1)$, let $x_t \in C$ be such that $x_t = tTx_t + (1-t)x$ (if $T$ is continuous, a unique such point exists by Propositions~\ref{sps1} and \ref{sps2}). Then for all $(t_n) \subseteq (0,1)$ such that $\lim\limits_{n \to \infty} t_n = 1$ we have that $(x_{t_n})$ is Cauchy.
\end{corollary}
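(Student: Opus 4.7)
The plan is to deduce this corollary from Theorem~\ref{main-thm} by using Proposition~\ref{psc-acc}, which identifies the approximating curve $x_t$ of a pseudocontraction with a resolvent $J_{\lambda A}x$ of the induced accretive operator $A := Id - T$. The only genuine work is a change of variable, since everything substantive has already been done in the accretive setting.

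Concretely, I would first set $A := Id - T$; by the remark preceding Proposition~\ref{psc-acc}, $A$ is accretive. Assuming $T$ continuous (this is implicit in the statement through the parenthetical remark ensuring existence and uniqueness of $x_t$), Corollary~\ref{psc-acc-cor} provides that for every $\lambda > 0$ one has $C \subseteq \operatorname{ran}(Id + \lambda A)$ and $J_{\lambda A} y \in C$ for all $y \in C$. Thus all the hypotheses that Theorem~\ref{main-thm} imposes on $A$, $C$, and $x$ are satisfied.

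Next, I would define $\lambda_n := t_n/(1-t_n)$ for each $n$, which is positive since $t_n \in (0,1)$ and satisfies $t_n = \lambda_n/(1+\lambda_n)$. Proposition~\ref{psc-acc} then gives $x_{t_n} = J_{\lambda_n A} x$. The map $t \mapsto t/(1-t)$ is continuous on $(0,1)$ with limit $+\infty$ at $1$, so $t_n \to 1$ forces $\lambda_n \to \infty$. Applying Theorem~\ref{main-thm} to the sequence $(\lambda_n)$ yields that $(J_{\lambda_n A} x)$, and hence $(x_{t_n})$, is Cauchy.

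There is essentially no obstacle here: the corollary is a translation exercise, and the heavy lifting lives in Theorem~\ref{main-thm}. The only minor subtlety worth flagging is that the hypotheses of Theorem~\ref{main-thm} require the range and invariance conditions to hold for \emph{all} $\lambda > 0$ (and not merely for the $\lambda_n$ of interest), which is precisely why one needs Corollary~\ref{psc-acc-cor} rather than only the pointwise conclusion of Proposition~\ref{psc-acc}.
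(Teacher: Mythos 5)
Your proof is correct and follows exactly the same route as the paper's: set $A := Id - T$, reparametrize via $\lambda_n := t_n/(1-t_n)$, invoke Proposition~\ref{psc-acc} and Corollary~\ref{psc-acc-cor} to identify $x_{t_n}$ with $J_{\lambda_n A}x$ and verify the hypotheses, then apply Theorem~\ref{main-thm}. The subtlety you flag (needing the range/invariance conditions for all $\lambda>0$, hence Corollary~\ref{psc-acc-cor} rather than just Proposition~\ref{psc-acc}) is a fair and accurate observation, though the paper leaves it implicit.
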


\begin{proof}
Set $A:=Id-T$. Set, for all $n$, $\lambda_n:= \frac{t_n}{1-t_n} \in (0,\infty)$, so $t_n = \frac{\lambda_n}{1+\lambda_n}$. Clearly, $\lim\limits_{n \to \infty} \lambda_n = \infty$. Using Proposition~\ref{psc-acc} and Corollary~\ref{psc-acc-cor}, we get that for all $n$, $x_{t_n}=J_{\lambda_n A} x$ and that all the other conditions in Theorem~\ref{main-thm} are satisfied.
\end{proof}

\begin{corollary}
Let $X$ be a Banach space which is uniformly convex and uniformly smooth. Let $C \subseteq X$ be a closed, convex, nonempty subset.  Let $T: C \to C$ be a pseudocontraction and $x \in C$. For all $t \in (0,1)$, let $x_t \in C$ be such that $x_t = tTx_t + (1-t)x$ (if $T$ is continuous, a unique such point exists by Propositions~\ref{sps1} and \ref{sps2}). Let $p \in C$ be a fixed point of $T$. Then for all $(t_n) \subseteq (0,1)$ such that $\lim\limits_{n \to \infty} t_n = 1$ we have that $(x_{t_n})$ is Cauchy.  
\end{corollary}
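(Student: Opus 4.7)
The plan is to follow exactly the template of the preceding (bounded-$C$) corollary, with Corollary \ref{cor-ubou} playing the role that Theorem \ref{main-thm} played there, and with the assumed fixed point $p$ of $T$ supplying the zero of $A$ that Corollary \ref{cor-ubou} requires.

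Concretely, I would first set $A := Id - T$, which is accretive, and note that $p$ is automatically a zero of $A$. I then define $\lambda_n := t_n/(1-t_n) \in (0,\infty)$, so that $t_n = \lambda_n/(1+\lambda_n)$ and $\lambda_n \to \infty$ as $n \to \infty$. Proposition \ref{psc-acc} identifies each $x_{t_n}$ with $J_{\lambda_n A} x$, while Corollary \ref{psc-acc-cor} supplies the two remaining hypotheses of Corollary \ref{cor-ubou}: that $C \subseteq ran(Id+\lambda A)$ and that $J_{\lambda A} y \in C$ for every $\lambda > 0$ and $y \in C$. Invoking Corollary \ref{cor-ubou} with the given zero $p$ then yields that $(J_{\lambda_n A} x)$ is Cauchy, hence so is $(x_{t_n})$.

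I do not foresee any real obstacle here: the argument is a purely bookkeeping reduction, since all the analytic content has already been packaged into Corollary \ref{cor-ubou}, and the translation between $x_t$ and the resolvent is already encapsulated in Proposition \ref{psc-acc}. The only mild point of care is that Corollary \ref{psc-acc-cor} is stated for \emph{continuous} pseudocontractions, so strictly speaking the proof leans on the implicit continuity of $T$ that accompanies the well-definedness of every $x_t$ via Propositions \ref{sps1} and \ref{sps2}, as flagged in the parenthetical remark inside the statement.
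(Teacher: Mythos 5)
Your proposal is correct and matches the paper's own proof essentially verbatim: set $A:=Id-T$, define $\lambda_n := t_n/(1-t_n)$, identify $x_{t_n}=J_{\lambda_n A}x$ via Proposition~\ref{psc-acc} and Corollary~\ref{psc-acc-cor}, and apply Corollary~\ref{cor-ubou} with $p$ as the zero of $A$. Your added remark about the reliance on the (implicit) continuity of $T$ to invoke Corollary~\ref{psc-acc-cor} is a fair observation but does not change the argument.
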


\begin{proof}
Set $A:=Id-T$. Set, for all $n$, $\lambda_n:= \frac{t_n}{1-t_n} \in (0,\infty)$, so $t_n = \frac{\lambda_n}{1+\lambda_n}$. Clearly, $\lim\limits_{n \to \infty} \lambda_n = \infty$. Using Proposition~\ref{psc-acc} and Corollary~\ref{psc-acc-cor}, we get that for all $n$, $x_{t_n}=J_{\lambda_n A} x$ and that all the other conditions in Corollary~\ref{cor-ubou} are satisfied.
\end{proof}

In the paper \cite{KohSip21}, multiple proofs (which are progressively made more quantitative) are presented of the main result, and in the following subsections, we sketch the modifications that have to be made to said proofs in order to adapt them to proving our main result, Theorem~\ref{main-thm}. Throughout, we shall take $b \in \N^*$ such that the diameter of $C$ is bounded by $b$, and $\eta$ and $\tau$ be moduli of convexity and smoothness (respectively) for $X$.

\subsection{Modifications to the original proof of Morales}\label{morales}

The first proof presented in \cite{KohSip21} is actually the proof which started the quantitative investigations in that paper, and is due to Morales \cite{Mor90} (in fact, the original argument of Reich \cite{Rei80} was not too different, but since he used the actual limit and not the limsup like here or the Banach limit like in some other authors' work, he had to first restrict himself to some separable subspace in order to make said limit exist).

Throughout, one has to replace $(t_n)$ by $(\lambda_n)$ -- and denote, where appropriate, for all $n$, $x_n:=J_{\lambda_n A} x$ -- and also replace fixed points of $T$ by zeros of $A$ (or equivalently fixed points of $J_A$). Then, instead of proving the `asymptotic regularity' or `approximate fixed point' statement of $\lim_{n \to \infty} \|x_n -Tx_n \| = 0$, or equivalently $\lim_{n \to \infty} \|x_n -h_Tx_n \| = 0$, one shows that $\lim_{n \to \infty} \|x_n -J_Ax_n \| = 0$. The argument goes as follows: for all $n$, since $\left(x_n, \frac{x-x_n}{\lambda_n}\right) \in A$ and $(J_A x_n, x_n - J_Ax_n) \in A$, and $A$ is accretive, one has that
$$\|x_n - J_Ax_n \| \leq \left\|x_n - J_Ax_n + \frac{x-x_n}{\lambda_n} - (x_n - J_Ax_n) \right\| = \left\| \frac{x-x_n}{\lambda_n}  \right\| \leq \frac{b}{\lambda_n},$$
so $\lim_{n \to \infty} \|x_n -J_Ax_n \| = 0$. 

One then more generally replaces $h_T$ by $J_A$ in the course of the proof (in fact, in the case that $A$ is of the form $Id-T$, one can check that the mapping $h_T$ used in \cite{KohSip21} is actually the same as $J_A$, so the use of $J_A$ is not that surprising).

Then, in order to obtain the first convergence statement in the proof, instead of using the fact that $T$ is a pseudocontraction, one uses the accretivity of $A$, as follows. Since $\left(x_n, \frac{x-x_n}{\lambda_n}\right) \in A$ and $(p,0) \in A$, and $A$ is accretive, one has that
$$\left\langle \frac{x-x_n}{\lambda_n} - 0, j(x_n-p) \right\rangle \geq 0,$$
i.e. that $\langle x_n - x, j(x_n -p)\rangle \leq 0$, which one then sums up with the previously obtained inequality $\limsup_{k \to \infty} \langle x - p, j(x_{n_k} - p) \rangle \leq 0$ to get that $(x_{n_k}) \to p$.

The proof then continues in the same way, with the appropriate modifications.

\subsection{\texorpdfstring{Modifications to the proof using limsup's but only $\varepsilon$-infima}{Modifications to the proof using limsup's but only epsilon-infima}}\label{llimsup}

We consider this proof to start more or less with the `Second Proof of the Claim' in \cite{KohSip21}. Claims 1 and 2 (together with the second `wrap-up' of the proof of the theorem) do not need many changes (but those changes are consequential, as we shall see momentarily). Again, one uses $J_A$ instead of $h_T$, but also instead of $T$ for `approximate fixed point' statements, e.g. $\|u-Tu\| \leq \eps$ becomes $\|u-J_Au\| \leq \eps$ and $\|u_m-Tu_m\| \leq 1/m$ becomes $\|u_m-J_Au_m\| \leq 1/m$. Since $J_A$ does the job for both, one does not need to `oscillate' between $T$ and $h_T$, and thus does not need to use the modulus of uniform continuity $\theta$. Therefore, one can take
$$\eta_1:= \min\left(\eps, \frac12 \psi_{b,\eta}(\eps) \right) > 0.$$

The most elaborate changes are in Claim 3, and thus we shall express the `new' portion in detail. To orient ourselves, we give now the new statement of this claim: for all $(\lambda_n) \subseteq (0,\infty)$ such that $\lim\limits_{n \to \infty} \lambda_n = \infty$ and for all $\eps > 0$ there is a $v \in C$ such that:
\begin{itemize}
\item for all $z \in C$, $\limsup\limits_{n \to \infty} \|J_{\lambda_n A} x-v\|^2 \leq \limsup\limits_{n \to \infty} \|J_{\lambda_n A} x-z\|^2 + \eps$;
\item for all $\lambda\in(0,\infty)$, $\langle J_{\lambda A} x-x, j(J_{\lambda A} x-v) \rangle \leq \eps$.
\end{itemize}

To prove this claim, we presuppose the truth of the new version of Claim 2, i.e. that for all $(\lambda_n) \subseteq (0,\infty)$ such that $\lim\limits_{n \to \infty} \lambda_n = \infty$ and for all $\eps > 0$ there is a $u \in C$ such that:
\begin{itemize}
\item for all $z \in C$, $\limsup\limits_{n \to \infty} \|J_{\lambda_n A} x-u\|^2 \leq \limsup\limits_{n \to \infty} \|J_{\lambda_n A} x-z\|^2 + \eps$;
\item $\|u-J_Au\| \leq \eps$.
\end{itemize}

Take $\eta_2:=\min\left(\eps, \frac12\psi_{b,\eta}\left(\omega_\tau\left(b,\frac{\eps}{2b}\right)\right)\right).$
Apply (our version of) Claim 2 for $(\lambda_n)$ and $\eta_2$ and put $v$ to be the resulting $u$.
We have to show that for all $\lambda\in(0,\infty)$, $\langle J_{\lambda A} x-x, j(J_{\lambda A} x-v) \rangle \leq \eps$. Let $\lambda \in (0,\infty)$ and put $\delta:=\min\left(\eta_2,\frac{\eps}{4b\lambda},\omega_\tau(b,\frac{\eps}{4b})\right)$. Apply (our version of) Claim 2 for $(\lambda_n)$ and $\delta$ and put $v'$ to be the resulting $u$, so in particular $\|v'-J_Av'\|\leq\delta$. Since $\left(J_{\lambda A} x,\frac{x-J_{\lambda A} x}{\lambda}\right) \in A$ and $(J_Av', v'-J_Av') \in A$, we have that
$$\left\langle  \frac{x-J_{\lambda A} x}{\lambda} - ( v'-J_Av'), j(J_{\lambda A} x-J_Av') \right\rangle \geq 0,$$
so
$$\left\langle  \frac{x-J_{\lambda A} x}{\lambda}, j(J_{\lambda A} x-J_Av') \right\rangle \leq \langle   J_Av'- v', j(J_{\lambda A} x-J_Av') \rangle \leq \delta b,$$
from which we get that
\begin{equation}\label{e1j}
\langle J_{\lambda A} x-x, j(J_{\lambda A} x-J_Av') \rangle \leq \delta b\lambda \leq \frac{\eps}4.
\end{equation}
On the other hand, we know that $\|v'-J_Av'\| \leq \delta \leq \omega_\tau(b,\frac{\eps}{4b})$, i.e. that $\|(J_{\lambda A} x-v') - (J_{\lambda A} x-J_Av')\| \leq \omega_\tau(b,\frac{\eps}{4b})$, so
$$\|j(J_{\lambda A} x-v') - j(J_{\lambda A} x-J_Av')\| \leq \frac\eps{4b},$$
from which we get that
\begin{equation}\label{e2j}
\langle J_{\lambda A} x-x, j(J_{\lambda A} x-v')-j(J_{\lambda A} x-J_Av')\rangle \leq \frac\eps4.
\end{equation}
From \eqref{e1j} and \eqref{e2j} we get that
$$\langle J_{\lambda A} x-x, j(J_{\lambda A} x-v') \rangle \leq  \frac{\eps}2.$$

The proof then continues in the same way until the end of Claim 3, and all the way to the end of Claim 7, with the usual modifications. 
 
\subsection{Modifications to the proof using approximate limsup's}

We first remark that all the routine modifications that we have applied to the previous proof are still in effect, e.g. the use of $h_T$ is replaced by that of $J_A$.

Since we have also replaced the sequence $(t_n)$ by a sequence $(\lambda_n)$, we have to introduce new moduli to govern that sequence's behaviour. We shall thus consider now moduli $\alpha : \N \to \N$ and $\gamma : \N \to \N^*$ such that:
\begin{itemize}
\item for all $n$ and all $m \geq \alpha(n)$, $\lambda_m \geq n+1$;
\item for all $n$, $\lambda_n \leq \gamma(n)$.
\end{itemize}
In the case that, for all $n$, $\lambda_n = n+1$, we may take, for all $n$, $\alpha(n):=n$ and $\gamma(n):=n+1$.

The quantities $\beta$, $q$ and $\nu_1$ introduced afterwards are modified as follows:
\begin{align*}
  \beta(c,\eps) &:= \frac{p(\eps)}{4b\gamma(c)} \\
  q(p,c,d,\eps) &:= \min \left\{\beta(c,\eps), \beta(s_{p,g}(d),\eps), \omega_\tau\left(b,\frac{p(\eps)}{4b}\right)\right\}\\
  \nu_1(p,c,d,\eps) &:= \frac12\psi_{b,\eta}(q(p,c,d,\eps)).
\end{align*}

The proofs of Claims I and II go largely without change, and the only significant modifications to this proof arise in Claim III, namely in its first three sub-claims. We shall detail those now.

In the new version of Sub-claim 1, one has to prove that $\|x_{h}-v\|^2 - \left\|x_{h} -\frac{v+J_Av}2\right\|^2$, $\|x_{h}-J_Av\|^2 - \left\|x_{h} -\frac{v+J_Av}2\right\|^2$, $\|x_{h'}-v'\|^2 - \left\|x_{h'} -\frac{v'+J_Av'}2\right\|^2$, $\|x_{h'}-J_Av'\|^2 - \left\|x_{h'} -\frac{v'+J_Av'}2\right\|^2$ are all smaller or equal to $\nu_1(w,k,k',\eps)$.

To prove that, we first use the previously shown fact that
$$\|x_h - J_Ax_h \| \leq \frac{b}{\lambda_h},$$
so
$$\|x_h-J_Av\| \leq \|J_Av-J_Ax_h\| + \|x_h-J_Ax_h\| \leq \|x_h - v\| + \frac{b}{\lambda_h}.$$
Then we may write:
\begin{align*}
\|x_h - J_Av\|^2 &\leq \|x_h-v\| ^ 2 +\frac{b^2}{\lambda_h^2}+ 2b\cdot\frac{b}{\lambda_h}\\
&\leq \left\|x_h - \frac{v+J_Av}2 \right\|^2 + \frac{\nu_1(w,k,k',\eps)}2 + \frac{b^2}{\frac{4b^2}{\nu_1(w,k,k',\eps)}} + \frac{2b^2}{\frac{8b^2}{\nu_1(w,k,k',\eps)}} \\
&\leq \left\|x_h - \frac{v+J_Av}2 \right\|^2 + \frac{\nu_1(w,k,k',\eps)}2 + \frac{\nu_1(w,k,k',\eps)}4 + \frac{\nu_1(w,k,k',\eps)}4 \\
&=  \left\|x_h - \frac{v+J_Av}2 \right\|^2 + \nu_1(w,k,k',\eps).
\end{align*}
Similarly, we may show that $\|x_{h'}-J_Av'\|^2 - \left\|x_{h'} -\frac{v'+J_Av'}2\right\|^2  \leq \nu_1(w,k,k',\eps)$ and, thus, the proof of Sub-claim 1 is finished.

We then remark that the modulus of uniform continuity $\theta$ has disappeared from the new definition $\eta_1$, and this comes into play in the new version of Sub-claim 2, where one only shows that $\|J_Av-v\|,\ \|J_Av'-v'\| \leq q(w,k,k',\eps).$

In the new version of Sub-claim 3, one has to show that:
\begin{itemize}
\item $\langle x_k-x, j(x_k-w) \rangle$, $\langle x^w_{k'}-x,j(x^w_{k'}-w')\rangle \leq \frac{\nu_4(\eps)}3$;
\item $\langle x^w_{k'}-x,j(x^w_{k'}-w)\rangle$, $\langle x_k-x, j(x_k-w') \rangle \leq \frac{\eps^2}{96}$.
\end{itemize}

As with Claim 3 in the previous subsection, we only detail the beginning of the proof of this sub-claim.

We have that $\|J_Av-v\| \leq q(w,k,k',\eps)$. Since $\left(x_k,\frac{x-x_k}{\lambda_k}\right) \in A$ and $(J_Av, v-J_Av) \in A$, we have that
$$\left\langle  \frac{x-x_k}{\lambda_k} - ( v-J_Av), j(x_k-J_Av) \right\rangle \geq 0,$$
so
$$\left\langle  \frac{x-x_k}{\lambda_k}, j(x_k-J_Av) \right\rangle \leq \langle   J_Av- v, j(x_k-J_Av) \rangle \leq q(w,k,k',\eps) \cdot b,$$
from which we get that
\begin{equation}\label{e1j2}
\langle x_k-x, j(x_k-J_Av) \rangle \leq q(w,k,k',\eps) \cdot b\lambda_k\leq \beta(k,\eps) \cdot b\lambda_k = b\lambda_k \cdot \frac{p(\eps)}{4b\gamma(k)} \leq \frac{p(\eps)}4.
\end{equation}
On the other hand, we know that $\|v-J_Av\| \leq q(w,k,k',\eps) \leq \omega_\tau\left(b,\frac{p(\eps)}{4b}\right)$, i.e. that $\|(x_k-v) - (x_k-J_Av)\| \leq \omega_\tau\left(b,\frac{p(\eps)}{4b}\right)$, so
$$\|j(x_k-v) - j(x_k-J_Av)\| \leq \frac{p(\eps)}{4b},$$
from which we get that
\begin{equation}\label{e2j2}
\langle x_k-x, j(x_k-v)-j(x_k-J_Av)\rangle \leq \frac{p(\eps)}4.
\end{equation}
From \eqref{e1j2} and \eqref{e2j2} we get that
$$\langle x_k-x, j(x_k-v) \rangle \leq  \frac{p(\eps)}2.$$

The proof then continues in the same way until its end, and actually no more modifications need to be made to the original text in \cite{KohSip21}.

\subsection{Modifications to the majorization and the rate}

We now proceed to derive the actual new rate of metastability. We first remark that the `higher-order proof mining' performed in Section 5 of \cite{KohSip21} to extract the actual witness remains the same in its entirety. Only in Section 6, where all those quantities are `majorized', we need to operate some changes, and those only appear in the `majorant' of $\nu_1$, i.e. the function $\nu^*_1$, which becomes
$$\nu^*_1(m,n):=\frac12 \min_{c \leq \max\left(m,n+ g^M(n)\right)} \psi_{b,\eta}\left(\min \left\{\beta(c,\eps), \omega_\tau\left(b,\frac{p(\eps)}{4b}\right)\right\}\right),$$
noting that the function $\beta$ is also changed from the original, as per the previous sub-section.

Since, as remarked previously, the quantities no longer depend on the modulus of uniform continuity $\theta$, one may drop it from the indices, and thus denote the final extracted quantity by
$$\Theta_{b,\eta,\tau,\alpha,\gamma}(\eps,g).$$

\section{Final results and discussion}\label{final}

Summing up, we may express the main result of the present paper as follows.

\begin{theorem}\label{main-metastab}
Let $X$ be a Banach space which is uniformly convex with modulus $\eta$ and uniformly smooth with modulus $\tau$. Let $C \subseteq X$ be a closed, convex, nonempty subset. Let $b \in \N^*$ be such that  the diameter of $C$ is bounded by $b$. Let $A \subseteq X \times X$ be an accretive operator such that, for all $\lambda > 0$, we have that $C\subseteq ran(Id+\lambda A)$ and that, for all $y \in C$, $J_{\lambda A}y \in C$. Let $x \in C$.

Let $(\lambda_n) \subseteq (0,\infty)$, $\alpha : \N \to \N$ and $\gamma : \N \to \N^*$ be such that:
\begin{itemize}
\item for all $n$ and all $m \geq \alpha(n)$, $\lambda_m \geq n+1$;
\item for all $n$, $\lambda_n \leq \gamma(n)$.
\end{itemize}
Then, for all $\eps > 0$ and $g: \N \to \N$ there is an $N \leq \Theta_{b,\eta,\tau,\alpha,\gamma}(\eps,g)$ such that for all $m$, $n \in [N,N+g(N)]$,
$\|J_{\lambda_m A} x-J_{\lambda_n A} x\| \leq \eps.$
\end{theorem}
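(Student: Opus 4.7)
The plan is to follow exactly the template of \cite{KohSip21}, executing in parallel the four layers of the analysis (the Morales-style qualitative proof, the $\varepsilon$-infimum refinement, the approximate-limsup quantitative proof, and the majorization) with the substitutions described in Section \ref{modifs} consistently applied throughout. Concretely, I would first observe that Theorem \ref{main-thm} is itself proved by the argument sketched in Subsection \ref{morales}: the accretivity of $A$ together with $(x_n,(x-x_n)/\lambda_n)\in A$ and $(J_Ax_n,x_n-J_Ax_n)\in A$ gives $\|x_n-J_Ax_n\|\le b/\lambda_n\to 0$, and coupling this with $\langle x_n-x,j(x_n-p)\rangle\le 0$ (also from accretivity, applied to $(p,0)\in A$) replicates Morales' argument without ever invoking a modulus of uniform continuity for $T$.

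Second, I would upgrade this qualitative skeleton to the quantitative form via the chain of claims in Subsection \ref{llimsup}, using $J_A$ uniformly in place of both $h_T$ and $T$. The key inequality to carry over is the one proved in the detailed computation of Claim 3 in Subsection \ref{llimsup}: starting from $\|v'-J_Av'\|\le\delta$ and the accretivity pairing of $(J_{\lambda A}x,(x-J_{\lambda A}x)/\lambda)$ with $(J_Av',v'-J_Av')$, one obtains $\langle J_{\lambda A}x-x,j(J_{\lambda A}x-J_Av')\rangle\le\delta b\lambda$, which after the uniform-smoothness bridge $\|j(J_{\lambda A}x-v')-j(J_{\lambda A}x-J_Av')\|\le\varepsilon/(4b)$ yields the desired $\langle J_{\lambda A}x-x,j(J_{\lambda A}x-v')\rangle\le\varepsilon/2$. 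This inequality is the sole new ingredient replacing the pseudocontractivity-plus-modulus-$\theta$ step of the original; every other combinatorial move (the majorization of the limsup by an approximate infimum, the $\psi_{b,\eta}$-uniform-convexity estimate, etc.) transfers unchanged.

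Third, because the introduction of the sequence $(\lambda_n)$ in place of $(t_n)$ requires fresh rate information about the parameters, I would install the moduli $\alpha,\gamma$ and update the auxiliary quantities $\beta,q,\nu_1$ exactly as prescribed in the last subsection of Section \ref{modifs}. The proof of Sub-claim 1 of Claim III then runs on the estimate $\|x_h-J_Av\|\le\|x_h-v\|+b/\lambda_h$, whose square expansion, together with $\lambda_h\ge\lambda_k\ge 1/\text{(majorizing constant)}$ coming from $\alpha$ and $\gamma$, absorbs into the $\nu_1$ term. Sub-claims 2 and 3 are then copies of the inequality from the previous paragraph, instantiated at $w,w'$. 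At this point the higher-order extraction of Section 5 of \cite{KohSip21} applies verbatim because it is purely syntactic on the realizers of the quantitative claims, and the majorization of Section 6 goes through after replacing $\nu_1^*$ by the new expression displayed in the previous subsection (removing the dependence on $\theta$).

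The main obstacle I expect is bookkeeping rather than mathematical: one must verify that each invocation of $h_T$ or $T$ in \cite{KohSip21} is replaced by a use of $J_A$ for which the correct accretivity pairing is available (with the right element of $A$ on each side), and that the resulting $\delta$ and $\varepsilon$ tolerances match the ones enforced by $q(p,c,d,\eps)$. A second subtle point is the removal of the modulus $\theta$: one has to check that wherever the original proof oscillated between $T$ and $h_T$ (and hence invoked $\theta$), the single mapping $J_A$ now suffices because the identities $\|u-J_Au\|\le\eps$ and the accretivity inequality are doing, jointly, the work that previously required both pseudocontractivity of $T$ and uniform continuity. Once these verifications are in place, Theorem \ref{main-metastab} follows by setting $\Theta_{b,\eta,\tau,\alpha,\gamma}(\eps,g)$ to be the value produced by the majorization procedure with the updated $\nu_1^*$.
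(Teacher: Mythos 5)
Your proposal is correct and follows essentially the same approach as the paper: you correctly identify the systematic replacement of $h_T$ and $T$ by $J_A$, the accretivity pairing (with $\|x_n - J_A x_n\| \le b/\lambda_n$ and the Claim-3 inequality) that replaces pseudocontractivity plus the modulus $\theta$, the introduction of the moduli $\alpha,\gamma$ and the updated $\beta,q,\nu_1,\nu_1^*$, and the verbatim reuse of the higher-order extraction and majorization machinery from \cite{KohSip21}. (One small caveat: the lower bound on $\lambda_h$ in Sub-claim~1 comes from $\alpha$ alone via $h$ being large enough, not from a monotonicity relation $\lambda_h \ge \lambda_k$, which the hypotheses do not give; this does not affect your overall argument.)
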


We shall now detail the corresponding quantitative corollaries.

\begin{corollary}\label{cor-metastab}
Let $X$ be a Banach space which is uniformly convex with modulus $\eta$ and uniformly smooth with modulus $\tau$. Let $C \subseteq X$ be a closed, convex, nonempty subset. Let $A \subseteq X \times X$ be an accretive operator such that, for all $\lambda > 0$, we have that $C\subseteq ran(Id+\lambda A)$ and that, for all $y \in C$, $J_{\lambda A}y \in C$ (in particular, if $C=X$, then $A$ is $m$-accretive). Let $x \in C$. Let $p \in C$ be a zero of $A$ and $b \in \N^*$ be such that $\|x-p\| \leq b/2$.

Let $(\lambda_n) \subseteq (0,\infty)$, $\alpha : \N \to \N$ and $\gamma : \N \to \N^*$ be such that:
\begin{itemize}
\item for all $n$ and all $m \geq \alpha(n)$, $\lambda_m \geq n+1$;
\item for all $n$, $\lambda_n \leq \gamma(n)$.
\end{itemize}
Then, for all $\eps > 0$ and $g: \N \to \N$ there is an $N \leq \Theta_{b,\eta,\tau,\alpha,\gamma}(\eps,g)$ such that for all $m$, $n \in [N,N+g(N)]$,
$\|J_{\lambda_m A} x-J_{\lambda_n A} x\| \leq \eps.$
\end{corollary}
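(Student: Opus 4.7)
The plan is to reduce Corollary~\ref{cor-metastab} to Theorem~\ref{main-metastab} by passing to a suitable bounded subset, mirroring the qualitative reduction already carried out in the proof of Corollary~\ref{cor-ubou}. Specifically, I would set $D := C \cap \overline{B}(p, b/2)$. Then $D$ is closed, convex, and nonempty (it contains $x$, since $\|x-p\|\leq b/2$, and also contains $p$), and by the triangle inequality through $p$ its diameter is bounded by $b$ -- the very same $b$ that is used as the bound parameter in $\Theta_{b,\eta,\tau,\alpha,\gamma}(\eps,g)$, which is crucial so that the metastability bound is preserved.

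Next I would verify that the accretive operator $A$ still satisfies the range/invariance conditions required by Theorem~\ref{main-metastab} when $C$ is replaced by $D$. The range condition $D \subseteq \mathrm{ran}(Id + \lambda A)$ is inherited directly from $D \subseteq C$. The invariance condition $J_{\lambda A}(D) \subseteq D$ is the only non-routine check: given $y \in D$ and $\lambda > 0$, we have $J_{\lambda A} y \in C$ by hypothesis, and since $p$ is a zero of $A$ it is a fixed point of $J_{\lambda A}$, so nonexpansiveness of the resolvent yields
\[
\|J_{\lambda A} y - p\| = \|J_{\lambda A} y - J_{\lambda A} p\| \leq \|y - p\| \leq b/2,
\]
placing $J_{\lambda A} y$ in $\overline{B}(p, b/2)$ and hence in $D$.

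Having checked these hypotheses, I would simply invoke Theorem~\ref{main-metastab} with $C$ replaced by $D$, the same operator $A$, the same starting point $x$, and the same data $(\lambda_n)$, $\alpha$, $\gamma$, $\eta$, $\tau$, $b$. The conclusion produced is exactly the metastability statement with bound $\Theta_{b,\eta,\tau,\alpha,\gamma}(\eps,g)$, which is what is required. The only mild obstacle is verifying $J_{\lambda A}$-invariance of $D$, and as indicated above this is immediate from the nonexpansiveness of the resolvent together with $p$ being a fixed point of $J_{\lambda A}$; no quantitative work beyond that invoked in Theorem~\ref{main-metastab} itself is needed.
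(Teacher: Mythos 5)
Your proposal is correct and follows essentially the same route as the paper: pass to $D := C \cap \overline{B}(p,b/2)$, note that $D$ has diameter at most $b$, verify $J_{\lambda A}$-invariance of $D$ via nonexpansiveness of the resolvent and the fact that $p$ is a fixed point, and then apply Theorem~\ref{main-metastab} with $C$ replaced by $D$. Your writeup even makes explicit a couple of routine points (diameter bound, closedness/convexity of $D$) that the paper leaves implicit.
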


\begin{proof}
Put $D:= C \cap \overline{B}(p,b/2)$, so $x \in D$. We shall apply Theorem~\ref{main-metastab} with $C:=D$, from which we will get our conclusion. The only non-trivial condition to check is that, for all $\lambda >0$ and $y \in D$, $J_{\lambda A}y \in D$. Let $\lambda>0$ and $y \in D$. Then $y \in C$, so $J_{\lambda A}y \in C$. Since $\|J_{\lambda A}y - p \|\leq \|y-p\| \leq b/2$, we have that $J_{\lambda A}y\in \overline{B}(p,b/2)$, so $J_{\lambda A}y \in D$.
\end{proof}

We now turn to instantiating the above results for continuous pseudocontractions.

In the following corollaries, we shall consider, for sequences $(t_n) \subseteq (0,1)$, moduli $\xi : \N \to \N$ and $\gamma : \N \to \N^*$ such that:
\begin{itemize}
\item for all $n$ and all $m \geq \xi(n)$, $t_m \geq 1 - \frac1{n+1}$;
\item for all $n$, $t_n \leq 1 - \frac1{\gamma(n)}$.
\end{itemize}
In the case that, for all $n$, $t_n = 1 - \frac1{n+2}$, we may take, for all $n$, $\xi(n):=n$ and $\gamma(n):=n+2$.

We also define, for any $\xi : N \to \N$, the function $\xi':\N \to \N$, where we set, for all $n$, $\xi'(n):=\xi(n+1)$. We also denote, for any choice of parameters,
$$\widetilde{\Theta}_{b,\eta,\tau,\xi,\gamma} := \Theta_{b,\eta,\tau,\xi',\gamma}.$$

\begin{corollary}
Let $X$ be a Banach space which is uniformly convex with modulus $\eta$ and uniformly smooth with modulus $\tau$. Let $C \subseteq X$ be a closed, convex, nonempty subset. Let $b \in \N^*$ be such that the diameter of $C$ is bounded by $b$. Let $T: C \to C$ be a pseudocontraction and $x \in C$. For all $t \in (0,1)$, let $x_t \in C$ be such that $x_t = tTx_t + (1-t)x$ (again, if $T$ is continuous, a unique such point exists by Propositions~\ref{sps1} and \ref{sps2}).

Let $(t_n) \subseteq (0,1)$, $\xi : \N \to \N$ and $\gamma : \N \to \N^*$ be such that:
\begin{itemize}
\item for all $n$ and all $m \geq \xi(n)$, $t_m \geq 1 - \frac1{n+1}$;
\item for all $n$, $t_n \leq 1 - \frac1{\gamma(n)}$.
\end{itemize}
Then, for all $\eps > 0$ and $g: \N \to \N$ there is an $N \leq \widetilde{\Theta}_{b,\eta,\tau,\xi,\gamma}(\eps,g)$ such that for all $m$, $n \in [N,N+g(N)]$,
$\|x_{t_m}-x_{t_n}\| \leq \eps.$
\end{corollary}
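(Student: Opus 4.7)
The plan is to reduce the statement directly to Theorem~\ref{main-metastab} via the correspondence between resolvents and approximating curves of pseudocontractions given by Proposition~\ref{psc-acc} and Corollary~\ref{psc-acc-cor}. Set $A := Id - T$; then $A$ is accretive, $C \subseteq ran(Id + \lambda A)$ and $J_{\lambda A}(C) \subseteq C$ for every $\lambda > 0$, and for every $n$ we have $x_{t_n} = J_{\lambda_n A} x$, where $\lambda_n := t_n/(1-t_n) \in (0,\infty)$. This is exactly the same reduction that is used in the qualitative corollary just after Theorem~\ref{main-thm}, only now we need to carry quantitative information through the identification $t \leftrightarrow t/(1-t)$.

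The only nontrivial point is to verify that the moduli $\xi$ and $\gamma$ in our hypothesis give rise to moduli $\alpha$ and $\gamma$ for $(\lambda_n)$ of the kind required by Theorem~\ref{main-metastab}. Since $s \mapsto s/(1-s)$ is monotone on $(0,1)$, the upper bound $t_n \leq 1 - 1/\gamma(n)$ yields $\lambda_n \leq \gamma(n) - 1 \leq \gamma(n)$, so the same $\gamma$ serves as the upper modulus unchanged. For the lower bound, we have to ensure $\lambda_m \geq n+1$ for $m$ sufficiently large: if $t_m \geq 1 - 1/(k+1) = k/(k+1)$ then $\lambda_m \geq k$, so to obtain $\lambda_m \geq n+1$ we must apply the hypothesis on $\xi$ at the index $n+1$. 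Consequently, for $m \geq \xi(n+1) = \xi'(n)$ we have $\lambda_m \geq n+1$, and $\alpha := \xi'$ is a valid lower modulus.

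With $\alpha = \xi'$ and $\gamma$ unchanged, Theorem~\ref{main-metastab} gives, for each $\eps > 0$ and $g : \N \to \N$, some $N \leq \Theta_{b,\eta,\tau,\xi',\gamma}(\eps, g) = \widetilde{\Theta}_{b,\eta,\tau,\xi,\gamma}(\eps, g)$ with $\|J_{\lambda_m A} x - J_{\lambda_n A} x\| \leq \eps$ for all $m, n \in [N, N+g(N)]$. Translating back via $x_{t_n} = J_{\lambda_n A} x$ yields the desired metastability statement. No step is genuinely difficult; the only subtlety worth flagging is the off-by-one shift between the formulation of the lower modulus in terms of $1 - 1/(n+1)$ and the requirement $\lambda_m \geq n+1$, which is exactly what forces the appearance of $\xi'$ rather than $\xi$ in the final bound $\widetilde{\Theta}$.
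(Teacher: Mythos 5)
Your proof is correct and follows exactly the same route as the paper's: set $A := Id - T$, translate $x_{t_n}$ to $J_{\lambda_n A}x$ via Proposition~\ref{psc-acc} and Corollary~\ref{psc-acc-cor}, and verify that $\alpha := \xi'$ and $\gamma$ (unchanged) serve as moduli for $(\lambda_n)$, including the off-by-one shift that produces $\xi'$. The arithmetic in your verification of the moduli matches the paper's.
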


\begin{proof}
Set $A:=Id-T$. Set, for all $n$, $\lambda_n:= \frac{t_n}{1-t_n} \in (0,\infty)$, so $t_n = \frac{\lambda_n}{1+\lambda_n}$. Using Proposition~\ref{psc-acc} and Corollary~\ref{psc-acc-cor}, we get that for all $n$, $x_{t_n}=J_{\lambda_n A} x$, and that most of the conditions of Theorem~\ref{main-metastab}, which we seek to apply, are satisfied. We only need to check the moduli conditions for $\xi'$ and $\gamma$.

Firstly, let $n \in \N$ and $m \geq \xi'(n)=\xi(n+1)$. We want to show that $\lambda_m \geq n+1$. Since, by the condition on $\xi$, we have that $t_m \geq 1 - \frac1{n+2} = \frac{n+1}{n+2}$, and so that $1-t_m \leq \frac1{n+2}$, we get that $\lambda_m = \frac{t_m}{1-t_m} \geq \frac{n+1}{n+2} \cdot (n+2) = n+1$.

Let now $n \in \N$. We want to show that $\lambda_n \leq \gamma(n)$. Since, by the condition on $\gamma$, we have that $t_n \leq 1 - \frac1{\gamma(n)} = \frac{\gamma(n)-1}{\gamma(n)}$, and so that $1-t_n \geq \frac1{\gamma(n)}$, we get that $\lambda_n = \frac{t_n}{1-t_n} \leq \frac{\gamma(n)-1}{\gamma(n)} \cdot \gamma(n) = \gamma(n)-1\leq \gamma(n)$.

The conclusion then follows.
\end{proof}

\begin{corollary}
Let $X$ be a Banach space which is uniformly convex with modulus $\eta$ and uniformly smooth with modulus $\tau$. Let $C \subseteq X$ be a closed, convex, nonempty subset.  Let $T: C \to C$ be a pseudocontraction and $x \in C$. For all $t \in (0,1)$, let $x_t \in C$ be such that $x_t = tTx_t + (1-t)x$ (again, if $T$ is continuous, a unique such point exists by Propositions~\ref{sps1} and \ref{sps2}). Let $p \in C$ be a fixed point of $T$ and $b \in \N^*$ be such that $\|x-p\| \leq b/2$.

Let $(t_n) \subseteq (0,1)$, $\xi : \N \to \N$ and $\gamma : \N \to \N^*$ be such that:
\begin{itemize}
\item for all $n$ and all $m \geq \xi(n)$, $t_m \geq 1 - \frac1{n+1}$;
\item for all $n$, $t_n \leq 1 - \frac1{\gamma(n)}$.
\end{itemize}
Then, for all $\eps > 0$ and $g: \N \to \N$ there is an $N \leq \widetilde{\Theta}_{b,\eta,\tau,\xi,\gamma}(\eps,g)$ such that for all $m$, $n \in [N,N+g(N)]$,
$\|x_{t_m}-x_{t_n}\| \leq \eps.$
\end{corollary}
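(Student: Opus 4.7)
The plan is to reduce the statement directly to Corollary \ref{cor-metastab} via the same pseudocontraction-to-accretive reduction used in the preceding corollary. The only structural difference from that preceding corollary is that $C$ is no longer assumed to be bounded, so one routes the argument through Corollary \ref{cor-metastab} (which uses the hypothesis $\|x - p\| \leq b/2$ in conjunction with the zero $p$ of $A$) rather than through Theorem \ref{main-metastab} (which needs a bound on the diameter of $C$).

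First, I set $A := Id - T$; by the remark preceding Proposition \ref{psc-acc}, $A$ is accretive and its zeros coincide with the fixed points of $T$, so $p$ is a zero of $A$ lying in $C$. Define, for each $n$, $\lambda_n := t_n/(1 - t_n) \in (0, \infty)$, so that $t_n = \lambda_n/(1 + \lambda_n)$. Proposition \ref{psc-acc} then gives $x_{t_n} = J_{\lambda_n A} x$, while Corollary \ref{psc-acc-cor} supplies the range and invariance conditions on $A$ required as hypotheses by Corollary \ref{cor-metastab}.

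Next, I verify that $\xi'$ and $\gamma$ serve as valid moduli (respectively of the $\alpha$- and $\gamma$-type demanded by Corollary \ref{cor-metastab}) for the sequence $(\lambda_n)$. This is the same short computation as in the preceding corollary: for $m \geq \xi'(n) = \xi(n+1)$, the condition on $\xi$ gives $t_m \geq (n+1)/(n+2)$, hence $1 - t_m \leq 1/(n+2)$ and therefore $\lambda_m \geq n+1$; and $t_n \leq 1 - 1/\gamma(n)$ yields $\lambda_n \leq \gamma(n) - 1 \leq \gamma(n)$. Invoking Corollary \ref{cor-metastab} with these moduli produces, for every $\eps > 0$ and $g: \N \to \N$, an $N \leq \Theta_{b,\eta,\tau,\xi',\gamma}(\eps, g) = \widetilde{\Theta}_{b,\eta,\tau,\xi,\gamma}(\eps, g)$ such that $\|J_{\lambda_m A} x - J_{\lambda_n A} x\| \leq \eps$ for all $m, n \in [N, N + g(N)]$, which is exactly $\|x_{t_m} - x_{t_n}\| \leq \eps$. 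There is no genuine obstacle to overcome: the proof is simply the concatenation of two reductions already performed in the paper, namely the pseudocontraction-to-accretive translation via $\lambda_n = t_n/(1-t_n)$ and the passage from a bounded domain to an unbounded one via intersection with the ball $\overline{B}(p, b/2)$.
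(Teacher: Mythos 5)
Your proposal is correct and follows essentially the same route as the paper: set $A := Id - T$, translate via $\lambda_n := t_n/(1-t_n)$ using Proposition~\ref{psc-acc} and Corollary~\ref{psc-acc-cor}, check the moduli conditions for $\xi'$ and $\gamma$ as in the preceding bounded-domain corollary, and invoke Corollary~\ref{cor-metastab} (with $p$ serving as the zero of $A$). The only difference is that you spell out the moduli computation explicitly, whereas the paper simply refers back to the previous proof.
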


\begin{proof}
Set $A:=Id-T$. Set, for all $n$, $\lambda_n:= \frac{t_n}{1-t_n} \in (0,\infty)$, so $t_n = \frac{\lambda_n}{1+\lambda_n}$. Using Proposition~\ref{psc-acc} and Corollary~\ref{psc-acc-cor}, we get that for all $n$, $x_{t_n}=J_{\lambda_n A} x$, and that most of the conditions of Corollary~\ref{cor-metastab}, which we seek to apply, are satisfied. The moduli conditions are checked as in the previous proof.
\end{proof}

We now argue, as in \cite{KohSip21}, that our main theorem is a finitization in the sense of Tao of the following theorem (which is, again, a somewhat restricted form of the main result in \cite{Rei80}).

\begin{theorem}[\cite{Rei80}]\label{main-thm-sunny}
In the hypotheses of either Theorem~\ref{main-metastab} or Corollary~\ref{cor-metastab}, we have that for all $(\lambda_n) \subseteq (0,\infty)$ such that $\lim\limits_{n \to \infty} \lambda_n = \infty$, the sequence $(J_{\lambda_n A} x)$ converges to a zero of $A$, which we denote by $Qx$. In addition, the map $Q: C \to zer(A)$ thus defined is a sunny nonexpansive retraction (and therefore the unique such one).
\end{theorem}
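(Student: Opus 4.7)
The plan is to deduce the qualitative statement from the already-established metastability result together with the standard algebraic/analytic properties of resolvents that have been recorded or implicitly verified in Section~\ref{modifs}. I would proceed in five short steps.

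First, I would argue that for any sequence $(\lambda_n)\subseteq(0,\infty)$ with $\lambda_n\to\infty$, the sequence $(J_{\lambda_n A}x)$ is Cauchy and hence convergent: the metastability bound furnished by Theorem~\ref{main-metastab} (respectively Corollary~\ref{cor-metastab}) is, non-constructively, equivalent to the Cauchy property, so this step is immediate. Denote its limit by $y$. Next, I would identify $y$ as a zero of $A$ using the asymptotic regularity estimate derived at the start of Subsection~\ref{morales}, namely $\|J_{\lambda_n A}x - J_A(J_{\lambda_n A}x)\|\leq b/\lambda_n\to 0$. Since $J_A$ is nonexpansive and hence continuous on its domain, passing to the limit yields $J_Ay=y$, i.e.\ $y\in\mathrm{zer}(A)$.

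Second, I would show that the limit does not depend on the choice of $(\lambda_n)$, so that $Qx$ is well-defined. If $(\lambda_n)$ and $(\mu_n)$ are two such sequences, their interleaving $\lambda_0,\mu_0,\lambda_1,\mu_1,\ldots$ also diverges to infinity, and by Step~1 the corresponding resolvent sequence is Cauchy; hence both subsequences share the same limit.

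Third, I would verify the three defining properties of $Q$. For the retraction property, if $z\in\mathrm{zer}(A)$ then $J_{\lambda A}z=z$ for every $\lambda>0$, so $Qz=z$. For nonexpansivity, the inequality $\|J_{\lambda_n A}x-J_{\lambda_n A}x'\|\leq\|x-x'\|$ is preserved in the limit, giving $\|Qx-Qx'\|\leq\|x-x'\|$. The main remaining point is sunniness, which I would establish via Proposition~\ref{char-sunny}: fix $x\in C$ and $z\in\mathrm{zer}(A)$, and observe that since $\bigl(J_{\lambda_n A}x,\tfrac{x-J_{\lambda_n A}x}{\lambda_n}\bigr)\in A$ and $(z,0)\in A$, accretivity gives
$$\left\langle \frac{x-J_{\lambda_n A}x}{\lambda_n},\, j(J_{\lambda_n A}x - z)\right\rangle \geq 0,$$
i.e.\ $\langle J_{\lambda_n A}x-x,\, j(J_{\lambda_n A}x-z)\rangle\leq 0$. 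Using uniform smoothness, and hence norm-to-norm uniform continuity of $j$ on bounded sets, one may pass to the limit to obtain $\langle Qx-x,\, j(Qx-z)\rangle\leq 0$; rewriting via $j(Qx-z)=-j(z-Qx)$ gives $\langle x-Qx,\, j(z-Qx)\rangle\leq 0$, which by Proposition~\ref{char-sunny} is exactly the characterization of a sunny nonexpansive retraction onto $\mathrm{zer}(A)$.

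The uniqueness assertion is then automatic from the uniqueness of sunny nonexpansive retractions already recorded in Section~\ref{prelim}. I do not expect any real obstacle: the only slightly non-routine step is the passage to the limit in the inner product defining sunniness, but this is precisely where the uniform smoothness hypothesis is used, and boundedness of all relevant vectors by $b$ makes the estimate immediate.
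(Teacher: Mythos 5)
Your proposal is correct and matches the paper's own argument step for step: convergence from metastability, identification of the limit as a zero of $A$ via the asymptotic regularity estimate $\|J_{\lambda_n A}x - J_A(J_{\lambda_n A}x)\|\le b/\lambda_n$ and continuity of $J_A$, the retraction property from $J_{\lambda A}z = z$ on zeros, and the sunny-nonexpansive characterization obtained by the accretivity inequality $\langle J_{\lambda_n A}x - x, j(J_{\lambda_n A}x - z)\rangle\le 0$ followed by a limit using continuity of $j$ on bounded sets. The only deviations are cosmetic: you spell out the interleaving argument for well-definedness of $Q$ (which the paper leaves as ``clear'') and you verify nonexpansivity directly, which is redundant since Proposition~\ref{char-sunny} already yields both sunniness and nonexpansivity from the established inequality.
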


As in \cite{KohSip21}, the metastability of the sequence $(J_{\lambda_n A} x)$ immediately implies, using almost purely logical principles, its convergence. It is then clear that the limit does not depend on the $(\lambda_n)$, so we can unambiguously dub it $Qx$. For the rest of the proof, we fix a $(\lambda_n)$ and denote, for all $n$, $x_n:=J_{\lambda_n A} x$. That $Qx$ is a fixed point of $J_A$ (and thus a zero of $A$) follows from the continuity of $J_A$ and the fact (proven already in Subsection~\ref{morales}) that
$$\lim_{n \to \infty} \|x_n - J_Ax_n\| = 0.$$

If $x$ is already a zero of $A$, then clearly, for all $n$, $x_n=J_{\lambda_n A} x=x$ and therefore $Qx=x$. We have thus shown that $Q$ is a retraction. To show that $Q$ is sunny and nonexpansive, we seek to apply Proposition~\ref{char-sunny}. Let $p$ be a zero of $A$. We now argue as in Subsection~\ref{morales}. Let $n \in \N$. Since $\left(x_n, \frac{x-x_n}{\lambda_n}\right) \in A$ and $(p,0) \in A$, and $A$ is accretive, we have that
$$\left\langle \frac{x-x_n}{\lambda_n} - 0, j(x_n-p) \right\rangle \geq 0,$$
i.e. that $\langle x_n - x, j(x_n -p)\rangle \leq 0$. Since $j$ is homogeneous, we have that
$$\langle x - x_n, j(p-x_n) \rangle \leq 0.$$
Since the above holds for all $n \in \N$, by passing to the limit, using the continuity of $j$ (together with the fact that $(x_n)$ is bounded), we get that
$$\langle x - Qx, j(p-Qx) \rangle \leq 0,$$
which is what we needed to show.

\mbox{}

Finally, I want to mention some related work in the area. The result in this paper has been applied by Findling and Kohlenbach in \cite{FinKoh} towards obtaining a rate of metastability for an algorithm that finds a zero of an accretive operator in a Banach space which is uniformly convex and uniformly smooth, without relying on its resolvent. In addition, Pinto has recently introduced in \cite{PinXX} a nonlinear generalization of smooth spaces for which he proved the corresponding version of Reich's theorem. A quantitative analysis of that theorem, building on \cite{KohSip21} and the present paper will be presented in the forthcoming paper \cite{PinXY}.

\section{Acknowledgements}

This work originated in and benefited from discussions with Ulrich Kohlenbach, whom I would like to thank.

\end{document}